\newtheorem{defin}{Definition}[section]
\newtheorem{theorem}[defin]{Theorem}
\newtheorem{proposition}[defin]{Proposition}
\newtheorem{lemma}[defin]{Lemma}
\newtheorem{corollary}[defin]{Corollary}
\newtheorem{remark}[defin]{Remark}
\newcommand{\C}{\mathbb{C}}
\newcommand{\R}{\mathbb{R}}
\newcommand{\Z}{\mathbb{Z}}
\newcommand{\N}{\mathbb{N}}
\newcommand{\Tcal}{\mathcal{T}}
\newcommand{\bigperp}{%
  \mathop{\mathpalette\bigp@rp\relax}%
  \displaylimits
}
\newcommand{\bigp@rp}[2]{%
  \vcenter{
    \m@th\hbox{\scalebox{\ifx#1\displaystyle2.1\else1.5\fi}{$#1\perp$}}
  }%
}
\DeclareMathOperator{\Tr}{Tr}
\DeclareMathOperator{\Harm}{Harm}
\DeclareMathOperator{\lspan}{span}
\DeclareMathOperator{\Sym}{Sym}
\definecolor{arne}{rgb}{0.2,.7,0.9}
\definecolor{aurelio}{rgb}{0.9,.2,0.9}
\definecolor{frank}{rgb}{0.8,0.2,0.5}
\definecolor{marc}{rgb}{1,0.65,0}
\begin{document}

\title{Critical even unimodular lattices in the Gaussian core model}

\address{A.~Heimendahl, A.~Marafioti, A.~Thiemeyer, F.~Vallentin,
  M.C.~Zimmermann, Department Mathematik/Informatik, Abteilung
  Mathematik, Universit\"at zu K\"oln, Weyertal~86--90, 50931 K\"oln,
  Germany}

\author{Arne Heimendahl}

\email{arne.heimendahl@uni-koeln.de}

\author{Aurelio Marafioti}
\email{aureliomarafioti@gmail.com}

\author{Antonia Thiemeyer}
\email{athiemey@smail.uni-koeln.de}

\author{Frank Vallentin}
\email{frank.vallentin@uni-koeln.de}

\author{Marc Christian Zimmermann}
\email{marc.christian.zimmermann@gmail.com}

\date{May 24, 2022}

\subjclass{11H55, 52C17}

\begin{abstract}
  We consider even unimodular lattices which are critical for
  potential energy with respect to Gaussian potential functions in the
  manifold of lattices having point density $1$. All even unimodular
  lattices up to dimension $24$ are critical. We show how to determine
  the Morse index in these cases. While all these lattices are either local
  minima or saddle points, we find lattices in dimension $32$ which
  are local maxima. Also starting from dimension $32$ there are
  non-critical even unimodular lattices.
\end{abstract}

\maketitle

\markboth{A. Heimendahl, A. Marafioti, A. Thiemeyer, F. Vallentin, and
  M.C. Zimmermann}{Critical even unimodular lattices in the Gaussian
  core model}

\section{Introduction}

Let $L \subseteq \mathbb{R}^n$ be an $n$-dimensional lattice (a
discrete subgroup of $\R^n$ of full rank). Let $f : (0, \infty) \to \R$
be a nonnegative function, then the $f$-potential energy of $L$ is
defined as
\[
  \mathcal{E}(f,L) = \sum_{x \in L \setminus\{0\}} f(\|x\|^2).
\]

In this paper we are mainly interested in Gaussian potential functions
$f_\alpha(r) = e^{-\alpha r}$ with $\alpha > 0$. Point configurations
which interact via such a Gaussian potential function are referred to
as the Gaussian core model. They are natural physical systems (see
\cite{Stillinger1976a}) and they are mathematically quite general. By
Bernstein's theorem (see \cite[Theorem 12b, page 161]{Widder1941a}),
Gaussian potential functions span the convex cone of completely
monotonic functions ($C^\infty$-functions $f$ with
$(-1)^k f^{(k)}\geq 0$ for all $k \in \N$) of squared Euclidean
distance.

We are interested in a local analysis of the function
$L \mapsto \mathcal{E}(f_\alpha, L)$ when $L$ varies in the manifold of
rank $n$ lattices having point density $1$, which means that the
number of lattice points per unit volume equals $1$.  In particular,
we want to understand which even unimodular lattices are critical
points in the Gaussian core model and which type they have.

Recall that a lattice $L$ is called unimodular if it coincides with its dual
lattice, which is defined as
\[
  L^* = \{y \in \R^n : x \cdot y \in \Z \text{ for all } x \in L\},
\]
where $x \cdot y$ denotes the standard inner product of
$x, y \in \R^n$. The lattice $L$ is called even if for every lattice
vector $x \in L$ the inner product $x \cdot x$ is an even integer. It
is well-known that in a given dimension the number of even unimodular
lattices is finite and that they exist only in dimensions which are
divisible by $8$. Furthermore, dimensions $8$ and $24$ seem to be very
special. Cohn, Kumar, Miller, Radchenko and Viazovska \cite{Cohn2019a}
proved that the $E_8$ root lattice in dimension $8$ and the Leech
lattice $\Lambda_{24}$ in dimension $24$ are universally optimal point
configurations in their dimensions. This means that they minimize
$f$-potential energy for all point configurations having density $1$
in their dimensions (not only for lattices) and for all completely
monotonic functions of squared Euclidean distance.

\subsection{Structure of the paper and main results}

In Section~\ref{sec:Results} we present our concrete results. Here we
summarize the phenomena which occur.

\subsubsection{Dimension 8}

Section~\ref{ssec:Dimension-8}: In dimension $8$ the $E_8$ root
lattice is the only even unimodular lattice in dimension $8$ as
observed by Mordell \cite{Mordell1938a}. It is universally optimal. In
particular, it is a local minimum for $f_\alpha$-potential
energy. This was first proved by Sarnak and Str\"ombergsson
\cite{Sarnak2006a}, see also Coulangeon \cite{Coulangeon2006a}.

\subsubsection{Dimension 16}

Section~\ref{ssec:Dimension-16}: In dimension $16$ there are two even
unimodular lattices $D_{16}^+$ and $E_8 \perp E_8$, first classified
by Witt \cite{Witt1941a}. Both of them are critical and we show that
$D_{16}^+$ is a local minimum for $f_\alpha$-potential energy whenever
$\alpha$ is large enough and that $E_8 \perp E_8$ is a saddle point
whenever $\alpha$ is large enough. Our numerical computations strongly
suggest that $E_8 \perp E_8$ is a saddle point for all values of
$\alpha$.

\subsubsection{Dimension 24}

Section~\ref{ssec:Dimension-24}: Apart from the universally optimal
Leech lattice there are $23$ further even unimodular lattices in
dimension $24$. They were first classified by Niemeier
\cite{Niemeier1973a}. Again they are all critical. We show how to
determine their Morse index. We always find either local minima or
saddle points.

\subsubsection{Dimension 32}

Section~\ref{ssec:Dimension-32}: It is known that there are more than
80 millions even unimodular lattices in dimension $32$; cf.\ Serre
\cite{Serre1973a}. A complete classification has not been achieved
yet. We show that not all of them are critical. We also show that
there exist local \textit{maxima} for $f_\alpha$-potential energy. This
existence of local maxima answers a question of Regev and
Stephens-Davidowitz \cite{Regev2016a} which arose in their proof
strategy of the reverse Minkowski theorem; see also the exposition
\cite{Bost2018a} by Bost for a broad perspective. A similar
phenomenon, a local maximum for the covering density of a lattice, was
earlier found by Dutour Sikiri\'c, Sch\"urmann, and
Vallentin~\cite{Dutour2012a}.

\subsubsection{Proof techniques}

To prove these results we make use of the theory of lattices and
codes, especially spherical designs, theta series with spherical
coefficients, and root systems. We recall these tools in
Section~\ref{sec:Toolbox}. In Section~\ref{sec:Strategy} we describe
our strategy which is based on the explicit computation of the
signature of the Hessian of the function
$L \mapsto \mathcal{E}(f_\alpha, L)$. To work out this strategy it is
necessary to explicitly compute the eigenvalues of a symmetric matrix
which is parametrized by root systems. This is done in
Section~\ref{sec:Eigenvalues}.

\section{Toolbox}
\label{sec:Toolbox}

In this section we introduce the tools we shall apply later in this
paper. For more information we refer to the standard literature on
lattices and codes, in particular to Conway and Sloane
\cite{Conway1988a}, Ebeling \cite{Ebeling1994a}, Serre
\cite{Serre1973a}, Venkov \cite{Venkov2001a}, Nebe
\cite{Nebe2013a}. Readers familiar with lattices and codes might like
to skip immediately to the next section.

\subsection{Spherical designs}

A finite set $X$ on the sphere of radius $r$ in $\R^n$ denoted by
$S^{n-1}(r)$ is called a spherical $t$-design if
\[
  \int_{S^{n-1}(r)} p(x) \, dx = \frac{1}{|X|} \sum_{x \in X} p(x)
\]
holds for every polynomial $p$ of degree up to $t$. Here we integrate
with respect to the rotationally invariant probability measure on
$S^{n-1}(r)$.

If $X$ forms a spherical $2$-design, then
\begin{equation}
\label{eq:spherical-2-design}
  \sum_{x \in X} xx^{\sf T} = \frac{r^2 |X|}{n} I_n,
\end{equation}
holds, where $I_n$ denotes the identity matrix with $n$ rows/columns.

A polynomial $p \in \R[x_1, \ldots, x_n]$ is called harmonic if it
vanishes under the Laplace operator
\[
  \Delta p = \sum_{i=1}^n \frac{\partial^2 p}{\partial x_i^2} = 0.
\]
We denote the space of homogeneous harmonic polynomials of degree $k$
by $\Harm_k$. One can uniquely decompose every homogeneous polynomial
$p$ of even degree $k$
\begin{equation}
\label{eq:harmonic-decomposition}
p(x) = p_k(x) + \|x\|^2 p_{k-2}(x) + \|x\|^4 p_{k-4}(x) + \cdots +
\|x\|^k p_0(x)
\end{equation}
with $p_d \in \Harm_d$ and $d = 0, 2, \ldots, k$.

We can characterize that $X$ is a spherical $t$-design by saying that
the sum $\sum_{x \in X} p(x)$ vanishes for all homogeneous harmonic
polynomials $p$ of degree $1, \ldots, t$.

\smallskip

In the following we shall need the following technical lemma.

\begin{lemma}
\label{lemma:polynomial-p-H}
Let $H$ be a symmetric matrix with trace zero. The homogeneous polynomial
\[
p_H(x) = (x^{\sf T} H x)^2 = H[x]^2
\]
of degree four decomposes as in \eqref{eq:harmonic-decomposition}
\[
  p_H(x) = p_{H,4}(x) + \|x\|^2 p_{H,2}(x) + \|x\|^4 p_{H,0}(x)
\]
with $p_{H,d} \in \Harm_d$ and
\[
  p_{H,4}(x) = p_H(x) - \|x\|^2 \frac{4}{4+n} H^2[x] + \|x\|^4 \frac{2}{(4+n)(2+n)}
  \Tr H^2
\]
and
\[
  p_{H,0}(x) = \frac{2}{(2+n)n} \Tr H^2.
\]
\end{lemma}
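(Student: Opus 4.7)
The strategy is to exploit uniqueness of the decomposition \eqref{eq:harmonic-decomposition}: it suffices to produce homogeneous polynomials $p_{H,4}$, $p_{H,2}$, $p_{H,0}$ of degrees $4$, $2$, $0$ that sum (with the weights $1$, $\|x\|^2$, $\|x\|^4$) to $p_H$ and that each lie in their respective $\Harm_d$. First I would fix $p_{H,4}$ and $p_{H,0}$ exactly as in the statement and solve the resulting linear equation for $p_{H,2}$; combining $\tfrac{2}{(n+4)(n+2)} + \tfrac{2}{(n+2)n} = \tfrac{4}{(n+4)n}$, this forces
\[
  p_{H,2}(x) = \frac{4}{n+4}\left(H^2[x] - \frac{\Tr H^2}{n}\|x\|^2\right).
\]

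The bulk of the proof is then checking $\Delta p_{H,4} = \Delta p_{H,2} = 0$; the constant $p_{H,0}$ is trivially harmonic. For this I would first assemble three auxiliary identities, each a routine application of the product rule for $\Delta$: the quadratic form identity $\Delta(x^\tp A x) = 2\Tr A$ for any symmetric matrix $A$; the power identity $\Delta\|x\|^{2k} = 2k(n+2k-2)\|x\|^{2k-2}$; and the squared quadratic form identity $\Delta (x^\tp H x)^2 = 8\,H^2[x] + 4(x^\tp H x)\Tr H$. This last identity is the only place where the hypothesis $\Tr H = 0$ really enters: it collapses to $\Delta p_H = 8\,H^2[x]$.

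Once these identities are available, the verification is pure bookkeeping. For $p_{H,4}$, the product rule gives $\Delta(\|x\|^2\,H^2[x]) = 2(n+4)\,H^2[x] + 2\|x\|^2\Tr H^2$ and $\Delta\|x\|^4 = 4(n+2)\|x\|^2$; substituting into the claimed formula, the $H^2[x]$ contributions cancel immediately, and the coefficients $\tfrac{4}{n+4}$ and $\tfrac{2}{(n+4)(n+2)}$ have been engineered so that the $\|x\|^2 \Tr H^2$ contributions cancel as well. For $p_{H,2}$, applying the first two identities directly gives $\Delta p_{H,2} = \tfrac{4}{n+4}\bigl(2\Tr H^2 - 2\Tr H^2\bigr) = 0$.

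I do not anticipate a genuine obstacle: the entire argument reduces to applications of the product rule for the Laplacian, and the only real care needed is in tracking the combinatorial factors $n$, $n+2$, $n+4$ in the denominators, which are arranged precisely to make the cancellations work. The trace-free hypothesis is used in exactly one place, namely to kill the term $4(x^\tp H x)\Tr H$ in $\Delta p_H$; without it, $p_H$ itself would already carry a nontrivial degree-$4$ harmonic component of a different shape.
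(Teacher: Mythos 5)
Your proposal is correct, and it inverts the logical direction of the paper's argument: the paper \emph{derives} the components, whereas you \emph{verify} a stated ansatz and appeal to uniqueness of the decomposition \eqref{eq:harmonic-decomposition}. Concretely, the paper applies $\Delta$ and $\Delta^2$ to the unknown decomposition, using the Euler-formula identity \eqref{eq:Euler} for $\Delta\|x\|^{2k}q$ with $q$ harmonic, to obtain a triangular system: $\Delta^2 p_H = 8n(n+2)\,p_{H,0}$ pins down $p_{H,0}$, then $\Delta p_H$ pins down $p_{H,2}$, and $p_{H,4}$ is what remains. You instead fix $p_{H,4}$ and $p_{H,0}$ as stated, solve for $p_{H,2} = \tfrac{4}{n+4}\bigl(H^2[x] - \tfrac{\Tr H^2}{n}\|x\|^2\bigr)$ (which agrees with the paper's $(8+2n)p_{H,2} = 8H^2[x] - \tfrac{8}{n}\|x\|^2\Tr H^2$), and check $\Delta p_{H,4} = \Delta p_{H,2} = 0$ by the product rule; I verified your intermediate identities $\Delta(\|x\|^2 H^2[x]) = 2(n+4)H^2[x] + 2\|x\|^2\Tr H^2$ and $\Delta p_H = 8H^2[x] + 4(\Tr H)H[x]$, and the cancellations go through. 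The shared computational core is $\Delta p_H$; your route avoids the second Laplacian and the general identity \eqref{eq:Euler}, so it is slightly more elementary and self-contained, while the paper's route has the expository advantage of showing where the coefficients come from rather than pulling them from the statement. Both uses of the hypothesis $\Tr H = 0$ are in the same place, namely killing the term $4(\Tr H)H[x]$ in $\Delta p_H$.
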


\begin{proof}
  As a consequence of Euler's formula we have for a general harmonic
  polynomial $q \in \Harm_d$
\[
  \Delta \|x\|^2 q = (4d+2n)q + \|x\|^2 \Delta q = (4d+2n)q,
\]
and inductively
\begin{equation}
\label{eq:Euler}
  \Delta \|x\|^{2(k+1)} q = (k+1) (4k+4d+2n)\|x\|^{2k} q,
\end{equation}
see for example \cite[Lemma 3.5.3]{Simon2015a}\footnote{The factor $2$
  in (3.5.11) is wrong in \cite{Simon2015a}; it should be $1$.}.

Using~\eqref{eq:harmonic-decomposition} we get
\[
  \begin{split}
  \Delta p_H \;  = \; & \Delta p_{H,4} + (8 + 2n) p_{H,2} + \|x\|^2 \Delta
  p_{H,2} + \Delta \|x\|^4 p_{H,0}\\
  = \; & (8 + 2n) p_{H,2} + 2(4+2n) \|x\|^2 p_{H,0}.
  \end{split}
\]
Applying the Laplace operator another time yields
\[
  \Delta^2 p_H = 8 n (n+2) p_{H,0}.
\]
On the other hand, one can compute $\Delta^2 p_H$ directly. We have
\[
  H[x] = \sum_{i=1}^n \sum_{j = 1}^n H_{ij} x_i x_j
\]
and therefore
\[
  \Delta H[x] = 2 \sum_{i=1}^n H_{ii} = 2 \Tr H.
\]
Using the product formula for the Laplace operator and the symmetry of
$H$ we get
\[
  \Delta p_H = \Delta H[x]^2 = 2(H[x] \Delta H[x] +  \nabla
H[x] \cdot \nabla H[x])  = 4 (\Tr H) H[x] + 8 H^2[x].
\]
Therefore
\[
  \Delta^2 p_H = 8 (\Tr H)^2 + 16 \Tr H^2
\]
and so
\[
  p_{H,0} = \frac{2}{n(n+2)} \Tr H^2,
\]
where the last equation follows from $\Tr H = 0$.

We already computed
\[
 \Delta \|x\|^4 p_{H,0} = 2(4+2n) \|x\|^2 p_{H,0} = \frac{8}{n} \Tr H^2 \|x\|^2.
\]

Now we determine $p_{H,2}$ when $\Tr H = 0$:
\[
(8+2n) p_{H,2} = \Delta p_H - \|x\|^2 \frac{8}{n} \Tr H^2  =  8 H^2[x] -
\|x\|^2 \frac{8}{n} \Tr H^2.
\]

Finally we get $p_{H,4}$:
\[
  p_{H,4} = p_H - \|x\|^2 \frac{4}{4+n} H[x]^2 + \|x\|^4 \frac{2
    }{(4+n)(2+n)} \Tr H^2. \qedhere
\]
\end{proof}

\subsection{Theta series with spherical coefficients}

We will make use of theta series with spherical coefficients. Let
$L \subseteq \R^n$ be an even unimodular lattice and let
$p$ be a harmonic polynomial (sometimes also
called spherical polynomial).

We define the theta series of $L$ with spherical coefficients given by
$p$ by
\[
\Theta_{L,p}(\tau) = \sum_{x \in L} p(x) e^{\pi i \tau \|x\|^2} = \sum_{x \in L} p(x) q^{\frac{1}{2}\|x\|^2},
\]
where $\tau$ lies in the upper half plane $\{z \in \C : \Im(z) > 0\}$
and where $q = e^{2 \pi i \tau}$.

If $p = 1$ we also write $\Theta_{L}$
instead of $\Theta_{L,p}$. For $r \geq 0$ we define
\[
  L(r^2) = \{x \in L : x \cdot x = r^2\}.
\]
The set $L(r^2)$ is called a shell of $L$ if it is
not empty. Then
\[
  \Theta_L(\tau) = \sum_{m=0}^\infty a_m q^m \quad \text{with} \quad
  a_m =|L(2m)|.
\]
The theta series of $L$ is
related to its $f_\alpha$-potential energy through
\[
  \mathcal{E}(f_\alpha,L) = \Theta_L(\alpha i / \pi) - 1.
\]
Using the Poisson summation formula one sees that
\[
\Theta_L(iy) = y^{-n/2} \Theta_{L^*}(i/y) \; \text{ for } \; y > 0.
\]
In particular, when $L = L^*$ it is sufficient to consider Gaussian
potentials with $\alpha \geq \pi$.

If $p$ is a homogeneous harmonic polynomial
of degree $k$, then $\Theta_{L,p}$ is a modular form (for the full
modular group $\mathrm{SL}_2(\Z)$) of weight $n/2+k$. When $k > 1$
then $\Theta_{L,p}$ is a cusp form.  We only need that modular forms
form a graded ring which is isomorphic to the polynomial ring
$\C[E_4, E_6]$ in the (normalized) Eisenstein series
\[
  E_4(\tau) = 1 + 240 q + 2160 q^2 + 6720 q^3 + \cdots,
\]
and
\[
  E_6(\tau) = 1 - 504 q - 16632 q^2  - 122976 q^3 - \cdots,
\]
where the weight of the monomial $E_4^\alpha E_6^\beta$ is
$4\alpha + 6 \beta$. Generally, the normalized Eisenstein series are
given by
\[
  E_k(\tau) = 1 - \frac{2k}{B_k} \sum_{m = 1}^\infty \sigma_{k-1}(m)
  q^m \quad \text{for } k \geq 4,
\]
where $B_k$ is the $k$-th Bernoulli number and where
$\sigma_{k-1}(m) = \sum_{d|m} d^{k-1}$ is the sum of the $(k-1)$-th
powers of positive divisors of $m$. The space of cusp forms is a
principal ideal of the polynomial ring $\C[E_4, E_6]$ generated by the
modular discriminant
\[
  \Delta = \frac{1}{1728} (E_4^3 - E_6^2) = 0 + q - 24q^2 +
  252 q^3 \pm \cdots,
\]
which has weight $12$.

It is a standard fact that the cardinality $a_m = |L(2m)|$ of the shells
is asymptotically bounded, when $m$ tends to infinity, by
\[
  a_m = -\frac{n}{B_{n/2}} \sigma_{n/2-1}(m) + O(m^{n/4}),
\]
but in this paper we shall need a bound with explicit constants.

For this we we will use the following explicit bound by Jenkins and
Rouse \cite{Jenkins2011a} which relies on Deligne's proof of the Weil
conjectures: Let $f(\tau) = \sum_{m=1}^\infty a_m q^m$ be a cusp form
of weight $k$, let $\ell$ be the dimension of the space of cusp forms
of weight $k$, then
\begin{equation}
\begin{split}
  \label{eq:explicit-bound}
|a_m| \leq & \sqrt{\log(k)} \left(11 \cdot \sqrt{\sum_{r=1}^{\ell}
  \frac{|a_r|^{2}}{r^{k-1}}}
  + \frac{e^{18.72} (41.41)^{k/2}}{k^{(k-1)/2}} \cdot \left|\sum_{r=1}^{\ell} a_r e^{-7.288r} \right| \right)
  \\
  & \qquad  \cdot \;
d(m) m^{\frac{k-1}{2}},
\end{split}
\end{equation}
where $d(m)$ is the number of divisors of $m$.  

\smallskip

The following simple estimate will be helpful several times.

\begin{lemma}
 \label{lem:integral-test-estimate}
For $j \geq k/(2\alpha)$ we have
\begin{equation}
\label{eq:integral-test-estimate}
\sum_{m = j}^\infty m^k e^{-2\alpha m} \leq j^k e^{-2\alpha j} +
(2\alpha)^{-(k+1)} \Gamma(k+1, 2\alpha j),
\end{equation}
where
\[
  \Gamma(s,x) = \int_x^\infty t^{s-1} e^{-t} dt
\]
is the incomplete gamma function.
\end{lemma}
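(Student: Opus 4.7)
The plan is to apply a standard integral test argument, which only requires that the summand $g(m)=m^ke^{-2\alpha m}$ is monotonically decreasing on the range of summation.

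First, I would check that $g$ is decreasing on $[k/(2\alpha),\infty)$. Differentiating gives
\[
g'(t) = t^{k-1}e^{-2\alpha t}\bigl(k - 2\alpha t\bigr),
\]
which is nonpositive precisely when $t \geq k/(2\alpha)$. Thus on the interval $[j,\infty)$, under the hypothesis $j \geq k/(2\alpha)$, the function $g$ is nonnegative and nonincreasing.

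Next, I would use this monotonicity to bound each term of the sum (except the first) by an integral over the preceding unit interval:
\[
\sum_{m=j}^\infty m^k e^{-2\alpha m}
\;\leq\; j^k e^{-2\alpha j} + \sum_{m=j+1}^\infty \int_{m-1}^m t^k e^{-2\alpha t}\,dt
\;=\; j^k e^{-2\alpha j} + \int_j^\infty t^k e^{-2\alpha t}\,dt.
\]

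Finally, I would evaluate the tail integral by the substitution $u = 2\alpha t$, which yields
\[
\int_j^\infty t^k e^{-2\alpha t}\,dt = (2\alpha)^{-(k+1)} \int_{2\alpha j}^\infty u^k e^{-u}\,du = (2\alpha)^{-(k+1)} \Gamma(k+1,\,2\alpha j),
\]
giving the claimed inequality. There is no real obstacle here; the only subtle point is that the hypothesis $j \geq k/(2\alpha)$ is exactly what ensures monotonicity, so the integral test comparison is valid starting from index $j$ (not merely from some larger index).
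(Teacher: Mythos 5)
Your proposal is correct and follows the paper's own argument exactly: verify that $t\mapsto t^k e^{-2\alpha t}$ is decreasing for $t\geq k/(2\alpha)$, apply the integral test from index $j$, and convert the tail integral into the incomplete gamma function via the substitution $u=2\alpha t$. You merely spell out the derivative computation and the term-by-term comparison that the paper leaves implicit.
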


As for fixed $s$ and large $x$
\[
  \Gamma(s,x) \sim x^{s-1} e^{-x} \left( 1 + \frac{s-1}{x} + \frac{(s-1)(s-2)}{x^2}
  + \cdots \right)
\]
we see that the left hand side of~\eqref{eq:integral-test-estimate}
tends to zero for large $\alpha$ and fixed $j$ and $k$.

\begin{proof}
  The function $m \mapsto m^k e^{-2\alpha m}$ is monotonically
  decreasing for $m \geq k/(2\alpha)$. So we can apply the integral
  test
  \[
    \sum_{m = j}^\infty m^k e^{-2\alpha m} \leq j^k e^{-2\alpha j} +
    \int_{j}^\infty m^k e^{-2\alpha m} \, dm.
  \]
  Now using the definition of the incomplete gamma function after a
  change of variables yields the lemma.
\end{proof}

\subsection{Root systems}
\label{subsec:Root-Systems}

The shell $L(2)$ is called the root system of the even unimodular
lattice $L$, its elements are called roots. Witt classified in 1941
the possible root systems: These are orthogonal direct sums of the
irreducible root systems $A_n$ ($n \geq 1$), $D_n$ ($n \geq 4$),
$E_6$, $E_7$ and $E_8$.  The rank of a root system is the dimension of
the vector space it spans.  Let $e_1, \ldots, e_{n+1}$ be the
standard basis for $\R^{n+1}$. The root system $A_n$ is defined as
\[
\{\pm(e_i-e_j) : 1 \leq i < j \leq n+1\}.
\]
The root system $A_n$ has rank $n$, but lies in $\R^{n+1}$. It spans
the vector space $\R^{n+1} \cap \R(1,\ldots,1)^\perp \cong \R^n$.  In
the following we will consider $A_n $ as a subset in $\R^n $.  The
root system $D_n$ is defined as
\[
D_n= \{\pm(e_i\pm e_j) :  1 \leq i < j \leq n\}.
\]
Furthermore
\[
E_8 = D_8\cup \left\{\frac{e_1\pm\dots\pm e_8}{2}\right\},
\]
where we restrict the last set to all sums having an even number of
minus signs, and
\[
  E_7 = E_8\cap \R(e_7-e_8)^\perp \quad \text{and} \quad E_6 = E_7\cap
  \R(e_6-e_7)^\perp.
\]

All irreducible root systems form spherical $2$-designs, and we have
even spherical $4$-designs for $A_1$, $A_2$, $D_4$, $E_6$, $E_7$, and
$E_8$.

Let $R$ be a root system. Let $\sigma(x) = I_n - xx^{\sf T}$ be the
reflection at the hyperplane perpendicular to $x$. For all
$x, y \in R $ we have $\sigma(x) y \in R$, so that $R$ is invariant under
the reflection $\sigma(x)$. The group $W(R)$ generated by all reflections
$\sigma(x)$, with $x \in R$, is called Weyl group of the root system.

The Coxeter number $h$ of a root system $R$ with rank $n$ is defined
as $|R|/n$, the number of roots per dimension. For a root $r \in R$ we
denote by $n_0$ the number of roots $r'\in R$ with $r\cdot r' = 0$ and
by $n_1$ the number of roots $r' \in R$ with $r\cdot r'=1$. These
numbers $n_0$, $n_1$ do not depend on $r$ when $R$ is irreducible.

We summarize some properties of the irreducible root systems in
Table~\ref{table:Properties-Irreducible}.

\begin{table}[htb]
\small
\begin{tabular}{@{}ccccccc@{}}
\toprule
\textbf{name} & \textbf{rank} & \bm{$|R|$} & \bm{$n_0$} & \bm{$n_1$} & \bm{$h$} & \bm{$|W|$} \\
\midrule
$A_n$ & $n\geq 1$ & $n(n+1)$ & $(n-1)(n-2)$ & $2(n-1)$ & $n+1$ & $(n+1)!$ \\
\midrule
$D_n$ & $n\geq 4$ & $2n(n-1)$ & $2(n^2-5n+7)$ & $4(n-2)$ & $2(n-1)$ & $2^{n-1}n!$ \\
\midrule
$E_6$ & $6$ & $72$ & $30$ & $20$ & $12$ & $2^7 3^4 5$ \\
\midrule
$E_7$ & $7$ & $126$ & $60$ & $32$ & $18$ & $2^{10}3^4 5 7$ \\
\midrule
$E_8$ & $8$ & $240$ & $126$ & $56$ & $30$ & $2^{14}3^5 5^2 7$ \\
\bottomrule
\end{tabular}
\bigskip
\caption{Some properties of the irreducible root systems.}
\label{table:Properties-Irreducible}
\end{table}

\section{Strategy}
\label{sec:Strategy}

We compute the gradient and Hessian of
$L \mapsto \mathcal{E}(f_\alpha, L)$ at even unimodular lattices. For
this it is convenient to parametrize the manifold of rank $n$ lattices
having point density $1$ by positive definite quadratic forms of
determinant $1$.

The gradient and the Hessian of $\mathcal{E}(f_\alpha,L)$ at $L$ were
computed by Coulangeon and Sch\"urmann \cite[Lemma
3.2]{Coulangeon2012a}. Let $H$ be a symmetric matrix having trace zero
(lying in the tangent space of the identity matrix). We use the
notation $H[x] = x^{\sf T} H x$ and we equip the space of symmetric
matrices $\mathcal{S}^n$ with the inner product
$\langle A, B \rangle = \Tr(AB)$, where $A, B \in \mathcal{S}^n$. The
gradient is given by
\begin{equation}
\label{eq:gradient}
\langle  \nabla  \mathcal{E}(f_\alpha,L), H \rangle = -\alpha \sum_{x \in L \setminus \{0\}} H[x] e^{-\alpha \|x\|^2}.
\end{equation}

Now a sufficient condition for $L$ being a critical point is that all
shells of $L$ form spherical $2$-designs. Indeed, we group the sum
in~\eqref{eq:gradient} according to shells, giving
\[
\langle  \nabla  \mathcal{E}(f_\alpha,L), H \rangle =  -\alpha \sum_{r > 0} e^{-\alpha r^2} \sum_{x \in L(r^2)} H[x].
\]
Then for $r > 0$ every summand
\[
  \sum_{x \in L(r^2)} H[x]
  = \left\langle H, \sum_{x \in L(r^2)} x x^{\sf T}
   \right\rangle =  \frac{r^2 |X|}{n} \Tr(H) = 0
\]
vanishes because of~\eqref{eq:spherical-2-design} and because $H$ is
traceless. Hence, $L$ is critical.

This sufficient condition is fulfilled for all even unimodular
lattices in dimensions $8$, $16$, and $24$. This fact can be deduced
from the theory of theta functions with spherical coefficients and
modular forms as first observed by Venkov~\cite{Venkov1978a}. In
dimension $32$ this is no longer fulfilled in general but we can
identify cases where it is.

The Hessian is the quadratic form
\begin{equation}
\label{eq:Hessian}
\nabla^2 \mathcal{E}(f_\alpha,L)[H] = \alpha \sum_{x \in L\setminus\{0\}}
e^{-\alpha\|x\|^2} \left(\frac{\alpha}{2} H[x]^2 - \frac{1}{2}H^2[x]\right).
\end{equation}
Again grouping the sum according to shells we get
\begin{equation}
  \nabla^2 \mathcal{E}(f_\alpha,L)[H] = \alpha \sum_{r > 0} e^{-\alpha
    r^2} \sum_{x \in L(r^2)} \left(\frac{\alpha}{2} H[x]^2 - \frac{1}{2}H^2[x]\right).
\end{equation}
So it remains to determine the two sums
\begin{equation}
  \label{eq:two-sums}
  \sum_{x \in L(r^2)} H[x]^2 \quad \text{and} \quad \sum_{x \in
    L(r^2)} H^2[x].
\end{equation}
The second sum is easy to compute when $L(r^2)$ forms a spherical
$2$-design. In this case we have by~\eqref{eq:spherical-2-design}
\begin{equation}
\label{eq:spherical-2-design-H-squared}
  \sum_{x \in L(r^2)} H^2[x] = \left\langle H^2, \sum_{x \in L(r^2)}
      xx^{\sf T} \right\rangle  = \langle H^2,   \frac{r^2
      |L(r^2)|}{n} I_n \rangle =
  \frac{r^2 |L(r^2)|}{n} \Tr H^2.
\end{equation}
The first sum is only easy to compute when $L(r^2)$ forms even a
spherical $4$-design. Then (see \cite[Proposition
2.2]{Coulangeon2006a} for the computation)
\begin{equation}\label{eq:spherical-4-design-quadratic-form}
 \sum_{x \in L(r^2)} H[x]^2 = \frac{r^4 |L(r^2)|}{n(n+2)} 2 \Tr H^2.
\end{equation}
Together, when all shells form spherical $4$-designs, the
Hessian~\eqref{eq:Hessian} simplifies to
\begin{equation}
  \label{eq:Coulangeon-Hessian}
  \nabla^2 \mathcal{E}(f_\alpha,L)[H] = \frac{\Tr H^2}{n(n+2)}
  \sum_{r > 0} |L(r^2)| \alpha r^2 \left(\alpha r^2 - (n/2 + 1)\right) e^{-\alpha r^2}.
\end{equation}
Therefore, every $H$ with Frobenius norm $\langle H, H \rangle =
\Tr H^2 = 1$ is mapped to the same value, which implies that all the
eigenvalues of the Hessian coincide.

Sarnak and Str\"ombergsson \cite{Sarnak2006a}, see also Coulangeon
\cite{Coulangeon2006a}, showed that for $L = E_8, \Lambda_{24}$ the
Hessian $\nabla^2 \mathcal{E}(f_\alpha,L)[H]$ is positive for all
$\alpha >0$ which implies that $E_8, \Lambda_{24}$ are local minima
among lattices, for all completely monotonic potential
functions of squared Euclidean distance\footnote{This was one motivation for Cohn, Kumar, Miller,
  Radchenko, and Viazovska \cite{Cohn2019a} to prove their far
  stronger, global result.}.

The case when all shells form spherical $2$-designs but not spherical
$4$-designs requires substantially more work. This is our main
technical contribution. Then the Hessian has more than only one
eigenvalue. We determine these eigenvalues up to dimension $32$ by
considering the root system of $L$, that is the shell $L(2)$. Here the
quadratic form
\begin{equation}
\label{eq:crucial-quadratic-form}
  Q[H] = \sum_{x \in L(2)} H[x]^2
\end{equation}
will play a crucial role.

Indeed, consider again the first sum $\sum_{x \in L(r^2)} H[x]^2$
in~\eqref{eq:two-sums}. We decompose the polynomial $p_H(x) = H[x]^2$
into its harmonic components as in Lemma~\ref{lemma:polynomial-p-H}
and get
  \[
\sum_{x \in L(r^2)} p_H(x) = \sum_{x
      \in L(r^2)} p_{H,4}(x) + r^2 \sum_{x
      \in L(r^2)} p_{H,2}(x) + r^4 \sum_{x
      \in L(r^2)} p_{H,0}(x).
\]
Here the first sum equals
\[
\sum_{x \in L(r^2)} p_{H,4}(x) =  \sum_{x \in L(r^2)} H[x]^2 -
r^4 \frac{2}{(2+n)n} |L(r^2)| \Tr H^2,
\]
where we used Lemma~\ref{lemma:polynomial-p-H}
and~\eqref{eq:spherical-2-design-H-squared}. The second sum vanishes
because $L(r^2)$ is a spherical $2$-design and the third summand
equals
\[
  r^4 \sum_{x \in L(r^2)} p_{H,0}(x) = r^4  \frac{2}{(2+n)n} |L(r^2)| \Tr H^2.
\]

We can use theta series with spherical coefficients to determine the
first sum $\sum_{x \in L(r^2)} p_{H,4}(x)$
explicitly: $\Theta_{L,p_{H,4}}$ is a cusp form of weight $n/2+4$. In
dimension $16$, $24$, and $32$ there is (up to scalar multiplication)
only one cusp form of weight $n/2+4$. This is, respectively,
$\Delta$, $E_4 \Delta$ and $E_4^2 \Delta$. Their $q$-expansions
$ \sum_{m=0}^\infty b_m q^m $ all start by $0 + 1 \cdot q$.
Therefore, by
equating coefficients,
\[
\Theta_{L,p_{H,4}}(\tau)= \sum_{r>0}\sum_{x \in L(r^2)} p_{H,4}(x) q^{\frac{1}{2}r^2}  = c \sum_{m = 0}^\infty b_m q^m
\]
with
\[
c = \sum_{x \in L(2)} H[x]^2 - \frac{8}{(2+n)n} |L(2)| \Tr H^2.
\]
For $ r^2 = 2m $ it follows
\[
\sum_{x \in L(r^2)} H[x]^2  = c b_m   + 4m^2 \frac{2}{(2+n)n} |L(2m)| \Tr H^2.
\]
Hence, we only need to compute the eigenvalues
of~\eqref{eq:crucial-quadratic-form} to determine the signature of the
Hessian. When talking about eigenvalues of $Q$, we refer to the
eigenvalues of the Gram matrix with entries $b_Q(G_i,G_j)$, where
$ b_Q: \mathcal{S}^n \times \mathcal{S}^n \to \mathbb{R} $ is the
induced bilinear form
\begin{align}\label{eq:bil-form-b_Q}
	 b_Q(G,H) = \sum_{x\in L(2)}G[x] H[x]
\end{align}
and $(G_i)$ is an orthonormal basis of the space $\mathcal{S}^n$ with
respect to the inner product $\langle \cdot, \cdot \rangle$. If $H$ is
an eigenvector with eigenvalue $\lambda$, we have
\[
\sum_{x\in L(2)} H[x]^2 = \lambda \Tr H^2.
\]

Now let us put everything together.

\begin{theorem}
\label{thm:main}
  Let $L$ be an even unimodular lattice in dimension $n \leq 32$. Let
  \[
    \Theta_L(\tau) = \sum_{m = 0}^\infty a_m q^m \quad \text{with $a_m = |L(2m)|$}
  \]
  be the theta series of $L$ and let
  $
    \sum_{m=1}^\infty b_m q^m
$
be the cusp form of weight $n/2+4$ with $b_1 = 1$. Then all the eigenvalues of the Hessian
$\nabla^2 \mathcal{E}(f_\alpha, L)$ are given by
  \begin{equation}
    \label{eq:main}
    \begin{split}
& \frac{1}{n(n+2)}
    \sum_{m=1}^\infty  \left(b_m \frac{\alpha^2}{2}
  (\lambda n(n+2) - 8
  a_1)\right)  e^{-2 \alpha m}\\
& \; + \; \frac{1}{n(n+2)}
    \sum_{m=1}^\infty \left(a_m 2\alpha m \left(2\alpha m - (n/2 +
        1)\right)\right) e^{-2 \alpha m},
    \end{split}
  \end{equation}
where $\lambda$ is an eigenvalue of~\eqref{eq:crucial-quadratic-form}.
\end{theorem}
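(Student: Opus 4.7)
The plan is to execute the strategy already outlined in Section~\ref{sec:Strategy}: expand the Hessian~\eqref{eq:Hessian} shell by shell, reduce the two inner sums in~\eqref{eq:two-sums} to data coming from $L(2)$ alone using spherical design identities and the harmonic decomposition, and then transport this data to arbitrary shells via the uniqueness of a certain cusp form.

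First I would rewrite the Hessian as
\begin{equation*}
\nabla^2 \mathcal{E}(f_\alpha, L)[H] = \alpha \sum_{m=1}^\infty e^{-2\alpha m} \bigl( \tfrac{\alpha}{2} S_1(m) - \tfrac{1}{2} S_2(m) \bigr),
\end{equation*}
with $S_1(m) = \sum_{x \in L(2m)} H[x]^2$ and $S_2(m) = \sum_{x \in L(2m)} H^2[x]$. Every shell of an even unimodular lattice in dimension $n \leq 32$ covered by the theorem forms a spherical $2$-design, so~\eqref{eq:spherical-2-design-H-squared} yields $S_2(m) = (2m/n) \, a_m \Tr H^2$ immediately. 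For $S_1(m)$, I apply Lemma~\ref{lemma:polynomial-p-H} to split $H[x]^2 = p_{H,4}(x) + \|x\|^2 p_{H,2}(x) + \|x\|^4 p_{H,0}(x)$; the $p_{H,2}$ part sums to zero because it is harmonic of degree $2$ and the shell is a $2$-design, and the $p_{H,0}$ part contributes $\tfrac{2(2m)^2}{n(n+2)} a_m \Tr H^2$ by direct evaluation of $p_{H,0}$. Matters are thus reduced to computing $\sum_{x \in L(2m)} p_{H,4}(x)$ for every $m$.

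At this point I would invoke the key modular-form input. Since $p_{H,4}$ is harmonic of degree $4$, the generating function $\Theta_{L, p_{H,4}}$ is a cusp form of weight $n/2 + 4$. For $n \in \{16, 24, 32\}$ this space is one-dimensional, spanned respectively by $\Delta$, $E_4 \Delta$, $E_4^2 \Delta$, so $\Theta_{L, p_{H,4}} = c \sum_{m \geq 1} b_m q^m$ for some scalar $c$ read off the $q^1$-coefficient. Using the $2$-design identity on $L(2)$ to rewrite the $p_{H,0}$ term, this coefficient simplifies to
\begin{equation*}
c = \sum_{x \in L(2)} H[x]^2 - \frac{8 a_1}{n(n+2)} \Tr H^2.
\end{equation*}
Restricting to a normalized eigenvector $H$ of the bilinear form $b_Q$ associated to~\eqref{eq:crucial-quadratic-form} with eigenvalue $\lambda$ (so $\Tr H^2 = 1$ and $\sum_{x \in L(2)} H[x]^2 = \lambda$), this becomes $c = (\lambda n(n+2) - 8 a_1)/(n(n+2))$.

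Finally, I assemble $S_1(m) = c b_m + \tfrac{8 m^2}{n(n+2)} a_m$ and $S_2(m) = \tfrac{2m}{n} a_m$, plug into the shell expansion, and simplify the $a_m$-contribution via $\alpha\bigl(\tfrac{\alpha}{2}\cdot\tfrac{8m^2}{n(n+2)} - \tfrac{1}{2}\cdot\tfrac{2m}{n}\bigr) = \tfrac{2\alpha m(2\alpha m - (n/2+1))}{n(n+2)}$, and the $b_m$-contribution via $\alpha \cdot \tfrac{\alpha}{2} c = \tfrac{\alpha^2(\lambda n(n+2) - 8 a_1)}{2 n(n+2)}$. This produces~\eqref{eq:main} exactly. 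The only non-bookkeeping ingredient is the one-dimensionality of the weight-$(n/2+4)$ cusp-form space, which is where the hypothesis $n \leq 32$ is essential; in dimension $n = 8$ the space is trivial, so the first sum in~\eqref{eq:main} is empty and one recovers~\eqref{eq:Coulangeon-Hessian} directly (consistent with the fact that all $E_8$ shells are actually $4$-designs).
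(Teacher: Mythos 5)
Your proposal is correct and follows exactly the paper's own derivation: the Hessian is expanded shell by shell, the two inner sums are evaluated via the spherical $2$-design identity and the harmonic decomposition of $H[x]^2$, and the degree-$4$ component is propagated to all shells through the one-dimensionality of the weight-$(n/2+4)$ cusp-form space, with the coefficient $c$ read off the root shell and expressed through an eigenvalue $\lambda$ of $Q$. The arithmetic checks out, and your remark that $n=8$ degenerates to~\eqref{eq:Coulangeon-Hessian} matches the paper's own observation following the theorem.
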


Note that this theorem also includes the case when all shells of $L$
form spherical $4$-designs like in~\eqref{eq:Coulangeon-Hessian}
because of~\eqref{eq:spherical-4-design-quadratic-form}. In this case
and when the parameter $\alpha$ is large enough,
then~\eqref{eq:Coulangeon-Hessian} is strictly positive, which shows
that $L$ is a local minimum for $f_{\alpha}$-potential energy.

\medskip

Similarly, because the growth of $a_m$ and $b_m$ is polynomial in $m$
and because of the estimate provided in
Lemma~\ref{lem:integral-test-estimate}, we see that the first summand,
$m = 1$,
\[
\frac{1}{n(n+2)}  \left(\frac{\alpha^2}{2} (\lambda n(n+2)) - 2 a_1 \alpha  (n/2 +
1) \right) e^{-2\alpha}
\]
dominates~\eqref{eq:main} for large $\alpha$. In particular, for
large~$\alpha$, the first summand is strictly positive if $\lambda$ is
strictly positive and the first summand is strictly negative if
$\lambda$ vanishes and if $a_1 \neq 0$. As the quadratic
form~\eqref{eq:crucial-quadratic-form} is a non-trivial sum of
squares, the eigenvalues cannot be strictly negative and some
eigenvalue is always strictly positive. From this consideration we get:

\begin{corollary}
  \label{cor:large-alpha}
  Let $L$ be an even unimodular lattice in dimension $n \leq 32$ which
  is critical for $f_\alpha$-potential energy.  For all large enough
  $\alpha$ the lattice $L$ is a local minimum if and only if all
  eigenvalues of~\eqref{eq:crucial-quadratic-form} are strictly
  positive. If one eigenvalue of~\eqref{eq:crucial-quadratic-form}
  vanishes and if $|L(2)| > 0$, then $L$ is a saddle point for all
  large enough $\alpha$.
\end{corollary}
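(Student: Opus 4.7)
The plan is to read the conclusion directly off the explicit formula for the Hessian eigenvalues given by Theorem~\ref{thm:main}, by an asymptotic analysis of~\eqref{eq:main} as $\alpha\to\infty$. The three substeps are: isolate the dominant ($m=1$) term, bound the tail $\sum_{m\ge 2}$, and then do a sign analysis on the dominant term.

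First I would compute the $m=1$ contribution to~\eqref{eq:main} explicitly. Since $b_{1}=1$, the two $\alpha^{2}a_{1}$ pieces inside the brackets of~\eqref{eq:main} cancel exactly, and what remains is
\[
\left(\frac{\lambda\alpha^{2}}{2}\;-\;\frac{2(n/2+1)\,a_{1}}{n(n+2)}\,\alpha\right)e^{-2\alpha},
\]
precisely the expression already displayed in the paragraph preceding the corollary. Next I would dominate the tail $\sum_{m\ge 2}$. Since $\Theta_{L}$ is a modular form of weight $n/2$, the coefficients $a_{m}$ grow at most polynomially in $m$ (the Eisenstein part gives $O(\sigma_{n/2-1}(m))$ and the cusp part is controlled by the Jenkins--Rouse bound~\eqref{eq:explicit-bound}), while the $b_{m}$, being Fourier coefficients of a cusp form of weight $n/2+4$, are likewise polynomially bounded by~\eqref{eq:explicit-bound}. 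Plugging these polynomial bounds into Lemma~\ref{lem:integral-test-estimate} with $j=2$ yields a tail estimate of the form $\mathrm{poly}(\alpha)\,e^{-4\alpha}$, which is exponentially dominated by the $e^{-2\alpha}$ prefactor of the $m=1$ contribution.

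With dominance established, a sign analysis finishes the proof. If $\lambda>0$, the $\alpha^{2}$ term in the $m=1$ contribution wins for all large $\alpha$ and the corresponding Hessian eigenvalue is strictly positive. If $\lambda=0$ and $a_{1}=|L(2)|>0$, the $\alpha^{2}$ term vanishes and what remains is a strictly negative $\alpha$-term, so that Hessian eigenvalue is strictly negative. The quadratic form~\eqref{eq:crucial-quadratic-form} is a positive semidefinite sum of squares which is nontrivial exactly when $|L(2)|>0$; in that case it has no negative eigenvalue and at least one strictly positive one. Combining these facts: the Hessian is positive definite (hence $L$ a local minimum) for all large $\alpha$ if and only if every eigenvalue of~\eqref{eq:crucial-quadratic-form} is strictly positive; and if some eigenvalue of~\eqref{eq:crucial-quadratic-form} vanishes while $|L(2)|>0$, then the Hessian exhibits both a strictly positive and a strictly negative eigenvalue, so $L$ is a saddle point.

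The main obstacle is the quantitative tail estimate. The Jenkins--Rouse bound~\eqref{eq:explicit-bound} carries constants that depend rather badly on the weight $n/2+4$, and one must verify that the resulting polynomial factors coming out of Lemma~\ref{lem:integral-test-estimate} truly do not close the exponential gap between $e^{-2\alpha}$ and $e^{-4\alpha}$ for all sufficiently large $\alpha$. This is in principle routine but requires careful bookkeeping, and is precisely the reason that Lemma~\ref{lem:integral-test-estimate} was placed in the toolbox at the start of the paper.
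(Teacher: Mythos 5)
Your proposal is correct and follows essentially the same route as the paper: the authors derive the corollary from Theorem~\ref{thm:main} by exactly this isolation of the $m=1$ term (whose explicit form, after the cancellation of the $\alpha^{2}a_{1}$ pieces, is the expression displayed just before the corollary), the polynomial-growth-of-$a_m$-and-$b_m$ plus Lemma~\ref{lem:integral-test-estimate} tail bound, and the same sign analysis based on the positive semidefiniteness and nontriviality of the quadratic form~\eqref{eq:crucial-quadratic-form}. There is nothing substantive to add.
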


\section{Eigenvalues of~\eqref{eq:crucial-quadratic-form}}
\label{sec:Eigenvalues}

In this section we shall compute the eigenvalues of the quadratic form
\eqref{eq:crucial-quadratic-form} $Q[H] = \sum_{x\in R}H[x]^2$, where
we write $R = L(2)$ for the root system of the lattice.

\subsection{Irreducible root systems}

First we consider the case when $R$ is an irreducible root system of
type $A$, $D$, or $E$.

\begin{theorem} \label{Theorem:Eigenvalues} Let $R$ be an irreducible
  root system of type $A$, $D$, or $E$.  The quadratic form
  $Q[H] = \sum_{x\in R}H[x]^2$ has the following eigenvalues:
  \begin{center}
    \small
		\begin{tabular}{@{}ccc@{}}
			\toprule
			\textup{\textbf{root system}} & \textup{\textbf{eigenvalue}} & \textup{\textbf{multiplicity}} \\
			\midrule
			\multirow{3}{*}{$A_n,~n\geq 1$}   &$4h = 4(n+1)$ & $1$\\
			\addlinespace
			& $2(n+1)$  & $n$, for $n\geq 2$\\
			\addlinespace
			& $4$       & $n(n-1)/2 -1$, for $n\geq 2$\\
			\midrule
			\multirow{3}{*}{$D_n,~n\geq 4$}    & $4h = 8(n-1)$& $1$\\
			\addlinespace
			& $4(n-2)$     & $n-1$\\
			\addlinespace
			& $8$          & $n(n-1)/2$\\
			\midrule
			\multirow{2}{*}{$E_6$}    & $4h = 48$   &$1$\\
			\addlinespace
			& $12$       & $20$\\
			\midrule
			\multirow{2}{*}{$E_7$}    &$4h = 72 $    &$1$\\
			\addlinespace
			& $16$       & $27$\\
			\midrule
			\multirow{2}{*}{$E_8$}    &$4h = 120$    &$1$\\
			\addlinespace
			& $24$      & $35$\\
			\bottomrule
		\end{tabular}
                \end{center}
              \end{theorem}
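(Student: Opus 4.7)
My plan is to exploit the $W(R)$-invariance of $Q$. Since $R$ is stable under the Weyl group action, $Q[wHw^{\tp}] = Q[H]$ for all $w \in W(R)$, so the self-adjoint operator $T \colon \mathcal{S}^n \to \mathcal{S}^n$ defined by $\langle G, T(H)\rangle = b_Q(G,H)$ commutes with the conjugation action of $W(R)$. By Schur's lemma, $T$ acts as a scalar on every isotypic component of the $W(R)$-representation $\mathcal{S}^n$. The proof reduces to exhibiting an orthogonal decomposition of $\mathcal{S}^n$ into $W(R)$-invariant subspaces on which $T$ is scalar and evaluating the eigenvalue $Q[H]/\Tr H^2$ on a convenient representative of each.

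One eigenvalue is uniform across all cases: the line $\R I_n$ is trivial under $W(R)$, and since $\|x\|^2 = 2$ for every $x \in R$ we have $Q[I_n] = 4|R|$ and $\Tr I_n^2 = n$, yielding eigenvalue $4|R|/n = 4h$ with multiplicity~$1$. For the $E$-type root systems (and also for $A_1, A_2, D_4$) the root system is a spherical $4$-design, so formula~\eqref{eq:spherical-4-design-quadratic-form} with $r^2 = 2$ gives $Q[H] = \tfrac{8|R|}{n(n+2)} \Tr H^2$ for every traceless $H$. Hence $T$ acts as a single scalar on the entire subspace $\Harm_2 \subset \mathcal{S}^n$ of traceless symmetric matrices, and substituting $(n, |R|) = (6,72), (7,126), (8,240)$ recovers the eigenvalues $12$, $16$, $24$ with multiplicities $\dim \Harm_2 = 20, 27, 35$.

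For $D_n$ with $n \geq 5$ I would use the $W(D_n)$-invariant decomposition $\mathcal{S}^n = \R I_n \oplus V_{\mathrm{td}} \oplus V_{\mathrm{od}}$ into scalars, traceless diagonal, and off-diagonal symmetric matrices (signed permutations preserve each block). For $H = \mathrm{diag}(a_1, \ldots, a_n) \in V_{\mathrm{td}}$, each unordered pair $\{i,j\}$ contributes the four roots $\pm e_i \pm e_j$ with $H[\cdot]^2 = (a_i+a_j)^2$, and the identity $\sum_{i<j}(a_i+a_j)^2 = (n-2)\sum a_k^2 + (\sum a_k)^2$ combined with $\sum a_k = 0$ gives $Q[H] = 4(n-2) \Tr H^2$. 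For $H \in V_{\mathrm{od}}$, the identity $H[\pm e_i \pm e_j]^2 = 4 H_{ij}^2$ gives $Q[H] = 8 \Tr H^2$. For $A_n$ with $n \geq 3$ I would realize the root system on the hyperplane $V = \mathbf{1}^\perp \subset \R^{n+1}$; the representation $\Sym^2 V$ of $S_{n+1}$ decomposes into three irreducible summands of dimensions $1$, $n$, and $n(n-1)/2 - 1$. The trivial summand yields $4h = 4(n+1)$; for the standard summand, take $H = \mathrm{diag}(c_1, \ldots, c_{n+1})$ with $\sum c_i = 0$ and compute $Q[H] = 2(n-1) \sum c_i^2$ against $\Tr(PHP)^2 = \tfrac{n-1}{n+1} \sum c_i^2$, where $P = I - \tfrac{1}{n+1} J$ is the orthogonal projector onto $V$, obtaining eigenvalue $2(n+1)$. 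For the last summand, a representative such as $P(e_i e_j^{\tp} + e_j e_i^{\tp})P$ gives eigenvalue $4$ by a parallel short calculation.

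I expect the main obstacle to be the $A_n$ case: because the root system sits on the hyperplane $V \subsetneq \R^{n+1}$ rather than on a coordinate subspace, the Frobenius norm $\Tr H^2$ must be computed for $H$ viewed as an operator on $V$, which means replacing the ambient matrix by $PHP$ before taking the norm. The cleanest way to organize the bookkeeping is probably to quotient $\Sym^2 \R^{n+1}$ by the subspace of symmetric forms vanishing on $V$, or equivalently to work throughout in an orthonormal basis of $V$. Once this is handled, the three eigenvalues fall out of elementary computations, and the dimension count $1 + n + (n(n-1)/2 - 1) = \dim \mathcal{S}^n$ cross-checks the $S_{n+1}$-decomposition used. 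The $D_n$ case is entirely elementary, and the $E$-types reduce to a single application of the $4$-design identity~\eqref{eq:spherical-4-design-quadratic-form}.
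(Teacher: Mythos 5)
Your overall strategy --- Weyl-invariance of $Q$, Schur's lemma, an explicit orthogonal decomposition of $\mathcal{S}^n$ into invariant subspaces, and evaluation of $Q[H]/\Tr H^2$ on a representative of each --- is exactly the paper's. Most of your execution is sound and in places cleaner: your $D_n$ block computation is correct and more direct than the paper's (which evaluates on the representatives $M(x,y)=xy^{\sf T}+yx^{\sf T}$ for orthogonal roots and on $M(e_i-e_j,e_i+e_j)$); invoking the $4$-design identity for $E_6,E_7,E_8$ and for $D_4$ is legitimate and lets you bypass the irreducibility arguments the paper carries out for $\mathcal{T}^n_0$ and for the splitting of the off-diagonal block of $D_4$; and your representative $P\,\mathrm{diag}(c)\,P$ for the standard summand of $A_n$ works (it lands in the unique copy of the standard representation by equivariance, and $\Tr(PHP)^2=\frac{n-1}{n+1}\sum c_i^2$ is correct, giving $2(n+1)$; the paper instead uses the matrices $P_i=\sum_j M(e_i-e_j)-2I_n$).

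The genuine gap is your representative for the remaining $A_n$ summand. The matrix $P(e_ie_j^{\sf T}+e_je_i^{\sf T})P$ does \emph{not} lie in the $\bigl(n(n-1)/2-1\bigr)$-dimensional irreducible: it has a nonzero component along the trivial summand, since
\[
\bigl\langle P(e_ie_j^{\sf T}+e_je_i^{\sf T})P,\ P\bigr\rangle
=\Tr\bigl((e_ie_j^{\sf T}+e_je_i^{\sf T})P\bigr)=-\tfrac{2}{n+1}\neq 0,
\]
and likewise a nonzero component in the standard summand. Hence $Q[H]/\Tr(PHP)^2$ evaluated there is a weighted average of the three eigenvalues, not $4$; for $A_3$ one gets $8/\bigl(2-\tfrac{4}{4}+\tfrac{4}{16}\bigr)=32/5$. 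The repair is the paper's choice of representative: for $n\ge 3$ take $H=M(x,y)=xy^{\sf T}+yx^{\sf T}$ with $x,y$ two \emph{orthogonal roots}, say $x=e_1-e_2$, $y=e_3-e_4$. Then $H=PHP$ already, $\langle H,P\rangle=2\,x\cdot y=0$, orthogonality to the standard summand is a short direct check, and since $H[r]=2(x\cdot r)(y\cdot r)$ only the $8$ roots $\pm(e_a-e_b)$ with $a\in\{1,2\}$, $b\in\{3,4\}$ contribute, each with $H[r]^2=4$; thus $Q[H]=32$, $\Tr H^2=8$, and the eigenvalue is $4$. (For $n=2$ this summand is zero-dimensional, so nothing is needed.) With this one substitution your argument is complete.
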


We will embed the proof of Theorem~\ref{Theorem:Eigenvalues} in the
framework of representation theory.\footnote{In the following we apply
  concepts of unitary representations over the complex numbers, but
  note that all representations involved can in fact be defined over
  the reals.} The Weyl group $W$ of the root system $R$ acts on the space of
symmetric matrices $\mathcal{S}^n$ by conjugation
\begin{align*}
	W \times \mathcal{S}^n &\to \mathcal{S}^n   \\
	(S,H)  &\mapsto  S H S^{\sf T}.
\end{align*}
This turns $ (\mathcal{S}^n, \langle \cdot, \cdot \rangle )$ into a
unitary representation of $ W $, meaning that the action of $ W $
preserves the inner product
$ \langle \cdot , \cdot \rangle $.

Then the
bilinear form $ b_Q $, defined in \eqref{eq:bil-form-b_Q}, is invariant under the action of the Weyl group
$ W $, that is $ b_Q(SGS^{\sf T},SHS^{\sf T}) = b_Q(G,H) $ for all
$ S \in W $.
Due to the Riesz representation theorem, there is a linear map
$T : \mathcal{S}^n \to \mathcal{S}^n$ such that
\[
	b_Q(G,H) = \langle G, T(H) \rangle
\]
and the eigenvalues of the Gram matrix of $b_Q$ coincide with the
eigenvalues of $T$.
Since $ b_Q $ is invariant under the action of $ W $, the map $ T $ commutes with the action of
$ W$, i.e.
\begin{equation}
\label{eq:commutativity-property-of-E}
T(S H S^{\sf T}) = ST(H)S^{\sf T} \quad   \textup{ for all }    S \in W,
\end{equation}
hence, $ T $ is intertwining the representation
$ (\mathcal{S}^n, \langle \cdot, \cdot \rangle ) $ of the Weyl group
$ W $ with itself.

Instead of only considering the specific map $ T $ above, we determine
the common eigenspaces of all intertwiners that intertwine the
representation on $\mathcal{S}^n$ with itself. As these eigenspaces will turn out to be inequivalent,
Schur's lemma implies that these eigenspaces are exactly the pairwise
orthogonal, irreducible, $W$-invariant subspaces of $ \mathcal{S}^n $.

\subsection{Peter-Weyl theorem for irreducible root systems}

This gives rise to Theorem \ref{theorem:Peter-Weyl-For-Sn}, which is a
\emph{Peter-Weyl theorem} for the representation
$ (\mathcal{S}^n, \langle \cdot, \cdot \rangle ) $ of the Weyl group
$ W $ of an irreducible root system.

To state the theorem, we need to fix some notation, based on the
definition of root systems in Section \ref{subsec:Root-Systems}.  We
consider $ A_n $ as a root system in $ \R^n $ and, by slight abuse of
notation, we write $e_i -e_j$ for the corresponding root in $ \R^n$.
Moreover, define the symmetric bilinear operator
$M : \R^n\times \R^n \rightarrow \mathcal{S}^n$ by
\[
M(x,y) = xy^{\sf T} + yx^{\sf T}.
\]
The action of the Weyl group on $ M $ is given by
\[
SM(x,y)S^{\sf T} = M(Sx,Sy), \qquad S \in W.
\]
Furthermore, set
\[
M(x) = \frac{1}{2}M(x,x) = xx^{\sf T}
\]
and
\[
P_i = \sum_{ j \in \{1, \ldots, n+1\} \setminus \{i\} }M(e_i-e_j) - 2I_n.
\]

\begin{theorem} [Peter-Weyl for irreducible root systems]
\label{theorem:Peter-Weyl-For-Sn}
The space of symmetric matrices can be decomposed into the following
$W$-invariant, irreducible, inequivalent subspaces:
\begin{enumerate}[label=(\roman*)]
\item \label{peter-weyl-i} For $ R = A_n,~n\geq 2 $
\[
    \mathcal{S}^n = \lspan  \{I_n\} \perp U_1(A_n) \perp U_2(A_n),
\]
  where
  \begin{align*}U_1(A_n) &= \lspan  \{ M(x,y) :  x,y \in A_n, \, x\cdot y = 0  \},   \\
    U_2(A_n) &= \lspan \{ P_i : i = 1, \ldots, n+1 \}.
  \end{align*}
\item \label{peter-weyl-ii} For $ R = D_n,~n\geq 5$
\[
    \mathcal{S}^n = \lspan  \{I_n\} \perp U_1(D_n) \perp U_2(D_n),
\]
  where
  \begin{equation}
	U_1(D_n) =  \{M\in \mathcal{S}^n : M_{ii}=0, \, 1\leq i\leq
    n\} \label{eq:Off-Diagonal-EIgenspace}.
\end{equation}
and
\begin{equation}
	U_2(D_n) = \{\textup{diag}(d_1,\ldots,d_n) : d_1,\ldots,d_n\in\R,\,d_1+
	\cdots + d_n =0\} \label{eq:Diagonal-EIgenspace}.
\end{equation}
For $ n = 4 $ the space $ U_1(D_4) $ further splits into two
irreducible subspaces
\begin{equation}
  \label{eq:special-case-D-4}
  \begin{split}
  U_1(D_4) \; = \;   & \left \{ \begin{pmatrix}
			0 & a & b & c \\
			a & 0 & c & b \\
			b & c & 0 & a \\
			c & b & a & 0
                      \end{pmatrix}   \, : \,  a,b,c \in \R \right  \}
                    \\
 & \perp \quad \left  \{  \begin{pmatrix}
			0 & a & b & -c \\
			a & 0 & c & -b \\
			b & c & 0 & -a \\
			-c & -b & -a & 0
		\end{pmatrix}  \, : \,  a,b,c \in \R \right  \}    .
	\end{split}
\end{equation}
\item \label{peter-weyl-iii} For $ R \in \{E_6,E_7,E_8\} $
\[
    \mathcal{S}^n = \lspan  \{ I_n  \} \perp \mathcal{T}^n_0,
\]
  where $ \mathcal{T}^n_0$ is the space of traceless symmetric
  $ n \times n $ matrices.
\end{enumerate}
\end{theorem}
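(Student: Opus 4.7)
The plan is case-by-case verification of three things: that each listed subspace is invariant under conjugation $H \mapsto S H S^{\sf T}$ for $S \in W$; that the listed subspaces are pairwise orthogonal with dimensions summing to $n(n+1)/2$; and that each is $W$-irreducible with pairwise inequivalent summands. Invariance is immediate from the identity $S M(x,y) S^{\sf T} = M(Sx, Sy)$ together with the fact that $W$ permutes the roots. Orthogonality with respect to $\langle A, B\rangle = \Tr(AB)$ and the dimension count are then routine direct checks in coordinates.

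For $A_n$, the Weyl group is $S_{n+1}$ and the reflection representation is the standard (partition $(n,1)$) representation $V$. By the classical Littlewood--Richardson/character computation in the representation ring of $S_{n+1}$, one has $\Sym^2 V \cong \mathbf{1} \oplus V \oplus V'$, where $V'$ is the irreducible indexed by $(n-1,2)$, of dimension $n(n-1)/2 - 1$. Matching dimensions, I would identify $\lspan\{I_n\}$ with the trivial summand, $U_2(A_n) = \lspan\{P_1,\ldots,P_{n+1}\}$ with the copy of $V$ (noting that the $P_i$ satisfy exactly one linear relation $\sum_i P_i = 0$), and $U_1(A_n)$ with $V'$. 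Inequivalence is automatic since the three dimensions are distinct.

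For $D_n$ with $n \geq 5$, the subgroup $S_n < W(D_n)$ acts by coordinate permutations and preserves the splitting of $\mathcal{S}^n$ into diagonal and off-diagonal matrices. The diagonal part decomposes as $\lspan\{I_n\} \perp U_2(D_n)$, where $U_2(D_n)$ is the standard representation of $S_n$. To show $U_1(D_n)$ is irreducible, I would compute the commutant of the $W(D_n)$-action directly: commutation with the even-sign-change subgroup forces an intertwiner to be diagonal in the basis $\{M(e_i,e_j) : i<j\}$, and commutation with $S_n$ then forces all eigenvalues equal. For $n=4$, triality causes $U_1(D_4)$ to split further; here I would verify the subspaces in \eqref{eq:special-case-D-4} directly on generators of $W(D_4)$ and deduce inequivalence by exhibiting an element acting with different traces on the two $3$-dimensional pieces.

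For $E_6, E_7, E_8$, the reflection representation is absolutely irreducible, so $\Sym^2 \R^n = \lspan\{I_n\} \perp \mathcal{T}^n_0$. The irreducibility of $\mathcal{T}^n_0$ is the main obstacle. My approach is to show that the space of $W$-invariant symmetric bilinear forms on $\mathcal{T}^n_0$ is one-dimensional, which by Schur's lemma forces irreducibility. Any such invariant form can be written as $(A,B) \mapsto \sum_{x \in X} A[x] B[x]$ for a $W$-invariant multiset $X \subseteq \R^n$; using the stabilizer-orbit data summarized in Table~\ref{table:Properties-Irreducible} (the numbers $n_0$, $n_1$ controlling the orbit of the stabilizer of a root on the remaining roots), this reduces to a finite computation that pins the invariant form uniquely up to scalar. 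Inequivalence of the two summands follows from the trace/dimension comparison. The cleanest alternative would be to appeal to the character tables of $W(E_6), W(E_7), W(E_8)$, but the orbit-counting route is self-contained and fits naturally with the root-system language used throughout the paper.
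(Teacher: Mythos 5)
Your parts (i) and (ii) are sound. Part (i) is essentially the paper's argument: decompose $\Sym^2$ of the standard representation of $\mathfrak{S}_{n+1}$ as $\mathbf{1}\oplus V\oplus V_{(n-1,2)}$ and match the listed subspaces by dimension; the only real work, which you correctly flag, is the relation $\sum_i P_i=0$ together with linear independence of $P_1,\dots,P_n$. For part (ii) your irreducibility argument for $U_1(D_n)$, $n\ge 5$, is a genuine and arguably cleaner alternative to the paper's: each $M(e_i,e_j)$ is a weight vector for the even sign-change subgroup with character $\epsilon\mapsto\epsilon_i\epsilon_j$, these characters are pairwise distinct on the even subgroup exactly when $n\ge 5$, so any intertwiner is diagonal in this basis, and transitivity of $\mathfrak{S}_n$ on pairs makes it scalar; this also explains conceptually why $D_4$ splits (complementary pairs give equal characters). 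The paper instead rules out each candidate $\mathfrak{S}_n$-invariant subspace by exhibiting a reflection that moves it. One small omission: for $D_4$ you only plan to prove \emph{inequivalence} of the two $3$-dimensional pieces via traces; you still owe their \emph{irreducibility} (a character-norm computation, or the paper's observation that any intertwiner fixes each line $\R M(x,y)$ for orthogonal roots $x,y$, closes this).

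The genuine gap is in part (iii). Your reduction rests on the claim that every $W$-invariant symmetric bilinear form on $\mathcal{T}^n_0$ has the shape $(A,B)\mapsto\sum_{x\in X}A[x]B[x]$ for a $W$-invariant multiset $X\subseteq\R^n$. This is false: every such form is positive semidefinite, and even the linear span of all orbit forms is only the image of $\Sym^4((\R^n)^*)^W$ inside $\Sym^2((\mathcal{T}^n_0)^*)^W$, which is in general a proper subspace --- already for the trivial group the squares of the linear functionals $A\mapsto A[x]$ span a $5$-dimensional subspace of the $6$-dimensional $\Sym^2(\Sym^2(\R^2)^*)$. What your orbit computation can legitimately deliver (most cleanly via the fact that $4$ is not a fundamental degree of $W(E_6)$, $W(E_7)$, $W(E_8)$, so $\Harm_4$ has no $W$-invariants and every orbit is a spherical $4$-design) is that every orbit form, in particular $Q$, is a scalar multiple of $\Tr(AB)$ on $\mathcal{T}^n_0$. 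That suffices for the eigenvalue computation in Theorem~\ref{Theorem:Eigenvalues}, but it does not prove the irreducibility asserted in \ref{peter-weyl-iii}: a reducible orthogonal representation always carries at least a two-dimensional space of invariant symmetric forms, and you have not excluded invariant forms outside your family. The paper closes this differently: it invokes part \ref{peter-weyl-ii} for the maximal sub-root-system $D_{s_n}\subset E_n$, notes that any $W(E_n)$-intertwiner is scalar on each $U_i(D_{s_n})$ by Schur, and then shows by explicit reflections that the $W(E_n)$-orbit of $U_2(D_{s_n})$ spans all of $\mathcal{T}^n_0$, so the scalar propagates to the whole space. You need either that orbit-spanning argument or an appeal to the known character tables of $W(E_n)$ to finish part (iii).
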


\begin{remark}
  The proofs of \ref{peter-weyl-i} and \ref{peter-weyl-ii} will be
  based on the representation theory of the symmetric
  group\footnote{The authors would like to thank one of the anonymous
    referees for the suggestion and a detailed sketch of this
    approach.}(see \cite[Chapter 4]{Fulton1991} for details). In fact,
  the decompositions are immediate consequences of the representation
  theory of the symmetric group, most of the work lies in the explicit
  description of the irreducible subrepresentations, as we need these,
  for the explicit calculation of the eigenvalues in
  Theorem~\ref{Theorem:Eigenvalues}.

  We will give an elementary proof of \ref{peter-weyl-iii} in Section
  \ref{subsec:E-n}. However, as one of the anonymous referees pointed
  out, this could also be done by computing the explicit characters of
  the representation, as it was already done in the literature.
  See \cite{Frame1951} for the case $E_6$ and $E_7$, and
  \cite{Frame1970}, for the case $E_8$.
\end{remark}

The main ingredient is a decomposition formula for a representation of
$\mathfrak{S}_{n+1}$, the symmetric group on $n+1$ symbols. We write
\[
  U = \lspan\{e\},
\]
where $e$ is the all ones vector, for the trivial representation and
\begin{equation}
	\label{eq:V_n+1}
	V_{n+1} = \left\{ v \in \R^{n+1}  \, : \,  \sum_{i = 1}^{n+1} v_i = 0 \right\} = U^\perp
\end{equation}
for the standard representation of $\mathfrak{S}_{n+1}$. Clearly $U$
and $V_{n+1}$ are orthogonal as representations. Furthermore, both are
irreducible: they are the cases of a standard principle to construct
the irreducible representations of $\mathfrak{S}_{n+1}$ via Young
symmetrizers, which give a one-to-one correspondence between
partitions of $n+1$ and irreducible representations of
$\mathfrak{S}_{n+1}$ \cite[Theorem 4.3]{Fulton1991}.

One then obtains the decomposition\footnote{C.f. Exercise
  \cite[4.19]{Fulton1991}, which can be solved by showing that the
  representation $ \Sym^2(V_{n+1}) $ is equivalent to the
  representation $ U_{(n-2,2)} $, defined on
  \cite[P.~54]{Fulton1991}. This can be done by explicitly computing
  the character of $ \Sym^2(V_{n+1}) $ (see
  \cite[Chapter 2]{Fulton1991}) and $ U_{(n-2,2)} $ (see
  \cite[Eq.~4.33]{Fulton1991}). A decomposition of $ U_{(n-2,2)} $
  into irreps is given in the last displayed equation of
  \cite[P.~57]{Fulton1991}.}
\begin{equation}
  \label{eq:sym_decomposition}
  \Sym^2(V_{n+1}) \cong U \oplus  V_{n+1} \oplus V_{((n+1)-2,2)},
\end{equation}
where $V_{((n+1)-2,2)}$ is another irreducible representation\footnote{This is the irreducible representation corresponding to the partition $((n+1)-2,2)$ of $n+1$ of $\mathfrak{S}_{n+1}$, a Specht module.} of $\mathfrak{S}_{n+1}$.

\subsection{$ A_n $}\label{subsec:A_n}

We will begin with \ref{peter-weyl-i}. It is well known that $W(A_{n+1}) \cong \mathfrak{S}_{n+1}$ and we can explicitly describe the action of $W(A_n)$ in terms of the action of $\mathfrak{S}_{n+1}$ by permutation matrices via the identification $\mathcal{S}^n \cong \textup{Sym}^2(V_{n+1})$. For this, we explicitly write
\[
\textup{Sym}^2(V_{n+1}) = \{  A \in \mathcal{S}^{n+1} \, : \, Ae = 0 \},
\]
where, again, $ e $ is the all-ones vector. This can be done by identifying the root projectors $  xx^\top$ with $ x \in A_n $ with the projectors $ M(e_i-e_j)  $ with $ e_i-e_j \in \R^{n+1} $.
Let $ \mathfrak{S}_{n+1} $ be the symmetric group on $ n+1 $ symbols. Define a group action of $ \mathfrak{S}_{n+1} $ on $ \R^{n+1} $ via
\begin{align}\label{eq:ActionSymmetricGroup}
	\mathfrak{S}_n \times \R^{n+1} \to \R^{n+1}, \quad \sigma(v) :=  (v_{\sigma(1)}, \ldots, v_{\sigma(n+1)}).
\end{align}
For a Weyl group generator $ S = I_{n+1} - aa^{\sf \top} $ with $ a = e_i - e_j $ one can straightforwardly verify that $ S $ is a permutation matrix that swaps the entries $ v_i $ and $ v_j $:
\[
Sv = \sigma(v), \quad \sigma = (i \, \,  j).
\]
As $ \mathfrak{S}_n $ is generated by $ 2 $-cycles, it follows that $ W(A_{n}) $ is a matrix representation (by permutation matrices) of $ \mathfrak{S}_{n+1} $ acting on $\textup{Sym}^2(V_{n+1})  $.

This identification enables us to use decomposition \eqref{eq:sym_decomposition}
and at this point we, in principle, have already found the decomposition proposed in the theorem. Clearly $U \cong \lspan \{I_n\}$. Furthermore, below we will show that $U_1(A_n), U_2(A_n)$, as given in the theorem, are indeed subrepresentations of $ W(A_n) \cong \mathfrak{S}_{n+1} $ orthogonal to each other and $\lspan\{I_n\} \cong U$. We now proceed by comparing dimensions of the remaining summands. By the hook length formula \cite[4.12]{Fulton1991} we find $\dim(V_{n+1}) = n$ and $\dim(V_{((n+1)-2,2)}) = (n+1)(n-2)/2$. In Lemma \ref{lemma:dimension-U_2(A_n)} we will show that $\dim(U_2(A_n)) = n = \dim(V_{n+1})$, it then
follows that $ U_2(A_n) \cong V_{n+1} $. This also implies that $U_1(A_n) \cong V_{(n-2,2)}$, as the orthogonality of $U_1(A_n)$ and $U_2(A_n)$ implies that $U_1(A_n)$ is a subrepresentation of $V_{(n-2,2)}$, which, by the irreducibility of the latter, implies equivalence.

Therefore the following list of equivalences of representations is valid
\[
  U \cong \lspan \{I_n\},\ V_{n+1} \cong U_2(A_n),\ V_{(n-2,2)} \cong U_1(A_n),
\]
which then, since $U$, $V_{n+1}$, and $V_{((n+1)-2,2)}$ are irreducible, finishes the proof of part \ref{peter-weyl-i} of the theorem.

We will conclude this part of the proof by showing that $U_1(A_n), U_2(A_n)$ are indeed subrepresentations of $W(A_n)$, are orthogonal to each other and computing $\dim(U_2(A_n)) = n$ as used above.

We first show orthogonality. It is straightforward to check that all
operators in $ U_i(R) $ for $R \in \{D_n,A_n\} $ and $ i = 1,2 $ are
traceless, so $ \lspan \{I_n\} \perp U_i(R) $.

For $ U_1(A_n) \perp U_2(A_n)$, we need to check that for orthogonal
roots $ x,y \in A_n $
\begin{equation}
	\label{eq:OrthogonalityForAn}
	0 =\langle P_i, M(x,y)\rangle = 2\sum_{ j \in \{1,\ldots,n+1\} \setminus \{i\} }(x\cdot (e_i -e_j))(y\cdot (e_i -e_j)).
\end{equation}
Every summand of the right hand side of \eqref{eq:OrthogonalityForAn} is zero, if $ x = e_k-e_l $ and $ y = e_s -e_t $ for $ k,l,s,t \neq i $.
Otherwise, if $ x = \pm (e_i-e_k) $ and $ y = e_s - e_t $, then $(x\cdot (e_i -e_j))(y\cdot (e_i -e_j))$ is only non-zero, if $ j = s $ or $ j = t $.

Then we get
\[
(\pm(e_i-e_k)\cdot (e_i-e_s))((e_s-e_t)\cdot (e_i-e_s)) = \mp 1
\]
and
\[
(\pm(e_i-e_k)\cdot (e_i-e_t))((e_s-e_t)\cdot (e_i-e_t)) = \pm 1.
\]
Thus, the sum of the right hand side of \eqref{eq:OrthogonalityForAn}
is zero, which implies that the inner product is zero. Hence, all
spaces in \ref{peter-weyl-i} are orthogonal.

Next, we show that the spaces are invariant under the action of the Weyl group.
If $ x,y $ are orthogonal roots, then for $S\in W$ the roots
$ Sx, Sy $ are orthogonal as well, because the Weyl group preserves
orthogonality.  This directly implies the invariance of $ U_1(A_n)
$.
For the invariance of $ U_2(A_n) $ it suffices to observe that
\begin{align*}
	S_{e_i-e_j} P_k (S_{e_i-e_j})^{\mathsf{\top}} = \begin{cases}
		P_j, & \text{ if } i = k  \\
		P_i, & \text{ if } j = k \\
		P_k, & \text{ otherwise.}
	\end{cases}
\end{align*}

As a last step we compute the dimension of the space $ U_2(A_n) $.
\begin{lemma}\label{lemma:dimension-U_2(A_n)}
		 For $ n \ge 2 $ it holds that
		$
			\dim U_2(A_n) = n.
		$
\end{lemma}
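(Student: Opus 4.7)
The plan is to show $\dim U_2(A_n) = n$ by first establishing the single linear relation $P_1 + \cdots + P_{n+1} = 0$ (which gives the upper bound $\dim U_2(A_n) \le n$), and then showing that no further relations hold. The second step will be reduced, via the Weyl group symmetry, to a numerical inequality on two inner products.

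For the linear relation, I would expand
\[
  \sum_{i=1}^{n+1} P_i = \sum_i \sum_{j \neq i} M(e_i - e_j) - 2(n+1)\, I_n.
\]
In the double sum each unordered pair $\{i,j\}$ appears twice, so it equals $\sum_{x \in A_n} x x^{\sf T}$. Since $A_n$ is a spherical $2$-design on the sphere of radius $\sqrt{2}$ in $\R^n$ with $|A_n| = n(n+1)$ points (Section~\ref{subsec:Root-Systems}), identity~\eqref{eq:spherical-2-design} evaluates this sum to $2(n+1)\,I_n$, exactly cancelling the second term.

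For the matching lower bound, I would exploit the $W(A_n) \cong \mathfrak{S}_{n+1}$-symmetry. As already recorded in the preceding invariance check, conjugation by $S_{e_i - e_j}$ acts on the $P_k$ as the transposition $(i\ j)$, and these transpositions generate $\mathfrak{S}_{n+1}$; hence the action permutes the $P_i$'s through the natural (doubly transitive) action on indices. The Gram matrix $G_{ij} = \langle P_i, P_j \rangle$ therefore has constant diagonal $a$ and constant off-diagonal $b$, with eigenvalues $a + nb$ (once) and $a - b$ (with multiplicity $n$). The relation $\sum P_i = 0$ forces $a + nb = 0$, so the rank of $G$, and hence $\dim U_2(A_n)$, equals $n$ precisely when $a \neq b$.

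The only remaining task, and the main (mild) obstacle, is to verify $a \neq b$. For this I would use a clean description of $P_i$ on $V_{n+1} = \{v \in \R^{n+1} : v \cdot e = 0\}$ (with $e$ the all-ones vector), obtained from the identification $\mathcal{S}^n \cong \Sym^2(V_{n+1})$. Setting $f_i = (n+1) e_i - e \in V_{n+1}$, a direct expansion of $\sum_{j \neq i} M(e_i - e_j)\, v$ for $v \in V_{n+1}$ yields
\[
  P_i = \frac{1}{n+1}\, f_i f_i^{\sf T} - I_n.
\]
From $|f_i|^2 = n(n+1)$ and $f_i \cdot f_j = -(n+1)$ for $i \neq j$, one reads off $a = \Tr P_i^2 = n(n-1)$ and $b = \Tr(P_i P_j) = 1 - n$, so $a - b = n^2 - 1 > 0$ for $n \geq 2$, completing the proof.
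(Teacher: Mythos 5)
Your proof is correct, and its second half takes a genuinely different route from the paper's. The first step (summing the $P_i$ and using the spherical $2$-design identity~\eqref{eq:spherical-2-design} to get $\sum_{i=1}^{n+1}P_i=0$) is exactly the paper's argument. For the lower bound, however, the paper expands a putative relation $\sum_{i=1}^n\lambda_iP_i=0$ in the basis of root projectors $M(e_i-e_j)$ and invokes the linear independence of those $\binom{n+1}{2}$ projectors (justified via perfection of the root lattice) to force $\lambda_1=\cdots=\lambda_n=0$. You instead compute the rank of the Gram matrix $G_{ij}=\langle P_i,P_j\rangle$: by the $\mathfrak{S}_{n+1}$-symmetry (or just by your explicit formulas) $G=(a-b)I_{n+1}+bJ_{n+1}$, whose nontrivial eigenvalue $a-b=n^2-1$ is nonzero, so the rank is $n$. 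Your identity $P_i=\frac{1}{n+1}f_if_i^{\sf T}-I_n$ with $f_i=(n+1)e_i-e$ checks out as an operator on $V_{n+1}$, and the resulting values $a=\Tr P_i^2=n(n-1)$ and $b=\Tr(P_iP_j)=1-n$ are consistent ($a+nb=0$ recovers the relation $\sum P_i=0$; note also that $a=n(n-1)$ agrees with the paper's later computation of $\langle P_1,P_1\rangle$ in the proof of Theorem~\ref{Theorem:Eigenvalues}). What your approach buys is self-containedness: it avoids the appeal to linear independence of the root projectors, and the rank-one-plus-scalar form of $P_i$ gives the needed inner products essentially for free. What the paper's approach buys is that it stays entirely inside the $M(e_i-e_j)$ bookkeeping already set up for the orthogonality and invariance checks, at the cost of the perfection footnote.
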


\begin{proof}
	By summing the generators $ P_i $ of $ U_2(A_n) $ we obtain
	\[
	\sum_{i=1}^{n+1} P_i =  \sum_{x\in A_n}xx^{{\sf T}} -2(n+1)I_n,
	\]
	because each root projector $ xx^{\sf T}$, with $x \in A_n $,
	occurs in exactly two operators $ P_i $ and the roots $ x $
	and $ -x $ correspond to the same projector
	$ xx^{\sf T} = (-x)(-x)^{\sf T} $.  Since irreducible root systems
	are spherical $2$-designs, \eqref{eq:spherical-2-design}
	implies that
	\[
	\sum_{x\in R}xx^{{\sf T}} = 2hI_n = 2(n+1)I_n.
	\]
	Hence, $ \sum_{i=1}^{n+1} P_i =0 $, and so the matrices $P_i$
	are linearly dependent. We now show that the matrices
	$P_1,\ldots,P_n$ are linearly independent, implying
	$ \dim U_2(A_n) = n $. Suppose we have
	$\lambda_1,\ldots,\lambda_n\in \R$ with
	\[
	\sum_{i=1}^n \lambda_i P_i= 0.
	\]
	Let $\lambda = \lambda_1+\cdots+ \lambda_n$. We can write this equation as
	\[
	\sum_{i=1}^n \sum_{j\in \{1,\ldots,n+1\}\setminus\{i\}} \lambda_i M(e_i -e_j) + 2\lambda I_n = 0.
	\]
	For $ i \neq j $, the projector $M(e_i -e_j)$ appears as a summand in $P_i$ and $M(e_j-e_i)$ in $P_j$. Because $M(e_i -e_j)=M(e_j-e_i)$, rearranging the terms yields
	\[
	\sum_{1\le i < j\le n} (\lambda_i+\lambda_j) M(e_i -e_j) + \sum_{j=1}^n \lambda_j M(e_j-e_{n+1}) + 2\lambda I_n = 0.\]
	As by \eqref{eq:spherical-2-design},
	\begin{align*}
		I_n = \frac{1}{2(n+1)}\sum_{x\in R} xx^{\sf T}&= \frac{1}{2(n+1)}\sum_{\substack{i,j = 1,\ldots,n+1\\i\neq j}}M(e_i -e_j)    \\
		&=    \frac{1}{n+1}\sum_{1\leq i <j \leq n+1}M(e_i-e_j),
	\end{align*}
	this becomes

	\begin{align}\label{eq:IndependenceOfPi}
		&\sum_{\substack{i,j=1,\ldots,n\\i\neq j}} \left (\lambda_i+\lambda_j+\frac{2\lambda}{n+1} \Big) M(e_i -e_j)
		+ \sum_{j=1}^n \Big(\lambda_j+\frac{2\lambda}{n+1} \right) M(e_j-e_{n+1})
		=0.
	\end{align}
	Because the root projectors $ \{M(e_i-e_j) \, : \, 1 \le i < j
        \le n+1\} $ are linearly independent\footnote{This also
          follows from the fact that the root lattice is perfect and
          the number of root projectors coincides with the dimension of $ \mathcal{S}^n $.},  \eqref{eq:IndependenceOfPi} implies
	that
	\begin{align*}
		\lambda_i + \lambda_j + \frac{2\lambda}{n+1} &= 0,  \quad 1\le i \neq j \le n,  \\
		\text{and} \qquad \qquad \lambda_j+\frac{2\lambda}{n+1}&= 0, \quad  j=1,\ldots,n.
	\end{align*}
	By subtracting the equations, it follows that
	$\lambda_1=\ldots=\lambda_n =0$.
\end{proof}

\subsection{$ D_n $}

We will proceed with \ref{peter-weyl-ii}. The overall strategy is the same as in the $A_n$ case. On the abstract level we consider the representation
\[
  \mathcal{S}^n \cong \textup{Sym}^2(\R^n) = \textup{Sym}^2(U + V_n).
\]
We first obtain a decomposition of $\textup{Sym}^2(U + V_n)$ with respect to the action of the subgroup $\mathfrak{S}_n < W(D_n)$.

To this end, we first note that
\[
  \textup{Sym}^2(U + V_n) \cong \bigoplus_{a,b:\ a+b = 2} \textup{Sym}^a(U) \otimes \textup{Sym}^b(V_n) \cong U \oplus V_n \oplus \textup{Sym}^2(V_n)
\]
and the latter decomposes by \eqref{eq:sym_decomposition}, thus
\[
  \textup{Sym}^2(U + V_n) \cong U \oplus U \oplus V_n \oplus V_n \oplus V_{(n-2,2)}
\]
as $\mathfrak{S}_n$-representations.

Now we examine how these (irreducible) $\mathfrak{S}_n$-subrepresentations behave under the action of $W(D_n)$, by directly comparing them to the modules given in the theorem.

First we show that the spaces in \ref{peter-weyl-ii} are indeed representations of $ W(D_n) $.
It is obvious that the spaces in \ref{peter-weyl-ii} are orthogonal. To verify that the spaces are indeed subrepresentations, note that for $ S_\alpha  $ for $ \alpha^- = e_i -e_j$, $ \alpha^+ = e_i+e_j $  and $ \sigma = (i \, \, j) \in \Sigma_n $ we have
\begin{align*}
	S_{\alpha^-}M(e_k,e_\ell) S_{\alpha^-}^{\mathsf{\top}} &= M(e_{\sigma(k)}, e_{\sigma(\ell)}), \\ S_{\alpha^+}M(e_k,e_\ell) S_{\alpha^+}^{\mathsf{\top}} &=  M((-1)^{\delta_{k \in \{i,j\}}}e_{\sigma(k)}, (-1)^{\delta_{ \ell \in \{i,j\}}} e_{\sigma(\ell)}),
\end{align*}
implying that $ W(D_n) $ preserves $ I_n $ and maps the off-diagonal, respectively diagonal entries of a matrix to its off-diagonal, respectively diagonal entries.
Hence, the spaces $ U_1(D_n) $, $ U_2(D_n) $ and $ \lspan \{I_n\} $ are invariant under $ W(D_n) $.
The special case $ D_4 $ where $ U_1(D_4) $ decomposes further into two $ 3 $-dimensional invariant subspaces will be treated at the end of this section. 
Now as $\mathfrak{S}_n$-representations we get (i.e. by comparing dimensions)
\[
  \lspan\{I_n\} \cong U,\ U_2(D_n) \cong V_n,
\]
and, since they are already irreducible with respect to $\mathfrak{S}_n$, that these are irreducible $W(D_n)$-subrepresentations.
Furthermore, by orthogonality, this implies
\[
  U_1(D_n) \cong U \oplus V_n \oplus V_{(n-2,2)}.
\]
We are left to show that $U_1(D_n)$ is irreducible for $n \geq5$ and to obtain a decomposition into irreducible subrepresentations for $n =4$.

It is easy to see that, with respect to the action of $ \mathfrak{S}_n$,
\begin{align*}
	U \oplus V_n  \cong L &\coloneqq \lspan  \left \{   M \Big (e_i, \sum_{j \in \{1,\ldots,n\} \setminus{\{i\}}}e_j \Big ) \, : \, i  = 1,\ldots, n \right \}   \\
	&= \left  \{ \sum_{1 \le i < j \le n} (a_i+a_j) M(e_i,e_j) \, : \; a_1,\ldots, a_n \in \R  \right  \}
	 \subset U_2(D_n)
\end{align*}
and
\begin{align*}
	U \cong L_1 \coloneqq\lspan \left \{\sum_{1 \le i < j \le n} M(e_i,e_j)   \right \}.
\end{align*}
Hence, by orthogonality of $ U $ and $ V_n $,
\begin{align*}
	V_n \cong L_1^\perp \coloneqq \left  \{ \sum_{1 \le i < j \le n} (a_i+a_j) M(e_i,e_j) \, : \; a_1,\ldots, a_n \in \R, \, \sum_{i = 1}^n a_i = 0  \right  \}.
\end{align*}
If $ U_2(D_n) $ is not an irreducible $ W(D_n) $-representation, then, by Maschke's theorem, either $ L_1, L_1^\perp $ or $ L = L_1 \perp L_1^\perp $ is an irreducible $ W(D_n) $-representation.

We can directly see that $L_1$ and $L$ are not even $W(D_n)$-invariant: considering the action of the element $\alpha = e_1+e_2 \in W(D_n)$ gives
\[
	 S_\alpha \Big  (\underbrace{\sum_{1 \le i < j \le n} M(e_i,e_j) }_{\in L_1 \subset L} \Big ) S_\alpha^{\mathsf \top}  = M(e_1,e_2) - \sum_{i \in \{1,2\}} \sum_{k = 3}^n M(e_i,e_k) + \sum_{3 \le i,j \le n} M(e_i,e_j) \notin L,
\]
by showing that a certain system of linear equations has no solutions.

We are left with the case of $ L_1^\perp $ to consider. Here we fix the element
\[
	X \coloneqq  M(e_1,\sum_{j \in \{2,\ldots,n\}}e_j) -  M(e_2,\sum_{j \in \{1,\ldots,n\} \setminus \{2\}}e_j) \in L_1^\perp.
\]
Now, choosing $ \alpha = e_3+e_4 $, we can show that $ S_\alpha X S_\alpha \notin L_1 \oplus L_1^\perp $ for $ n \ge 5 $, again by considering a system of linear equations.

However, if $n = 4$, the system allows for a solution and the space $ L_1^\perp $ can be written as
\begin{align*}
	L_1^\perp = \left  \{  \begin{pmatrix}
		0 & a & b & -c \\
		a & 0 & c & -b \\
		b & c & 0 & -a \\
		-c & -b & -a & 0
	\end{pmatrix}  \, : \,  a,b,c \in \R \right  \},
\end{align*}
which can be shown to be invariant under $ W(D_4) $. Thus, $  L_1^\perp $ is irreducible and $ U_2(D_4) $ splits into two $ W(D_4) $-irreducible subspaces as
\begin{align*}
	U_2(D_4) = L_1^\perp \oplus L_2, \quad L_2= \left \{ \begin{pmatrix}
		0 & a & b & c \\
		a & 0 & c & b \\
		b & c & 0 & a \\
		c & b & a & 0
	\end{pmatrix}   \, : \,  a,b,c \in \R \right  \}
\end{align*}
with $ L_2 \cong  U \oplus V_{(n-2,2)}$.

It remains to prove that the irreducible subspaces for the  special case $ D_4$ are inequivalent, despite having the same dimension.

We will do this by showing a more general statement, that is, if  $ T $ is an intertwiner with respect to the action of $ W(D_n) $, then $ M(x,y) $ is an eigenvector of $ T $ for orthogonal roots $ x,y \in D_n $.

In the case of $ D_4 $, all three subspaces $ U_2(D_n) $, $ L_1^\perp $ and $ L_2 $ contain an operator $ M(x,y) $ for orthogonal roots $ x,y \in D_4 $.
This shows in particular that the intertwiner $ T $ is either identically zero on one of the three subspaces or $ U_2(D_n) $, $ L_1^\perp $ and $ L_2 $ or $ T $ must preserve the three subspaces. By Schur's lemma, this implies that they are inequivalent.

To see this, note that for $ \sigma(x)\in W $ it holds that
\begin{align*}
	\sigma(x) M(x,y)  \sigma(x)^{\sf T} = \sigma(y)M(x,y) \sigma(y)^{\sf T}= - M(x,y),
\end{align*}
so $M(x,y)$ is contained in the subspace
\begin{align*}
	U_{xy} := \{ X \in S^n \, : \,
	\sigma(x) X \sigma(x)^{\sf T} = \sigma(y)^{\sf T} X \sigma(y)^{\sf T} = -X	\}.
\end{align*}
Let $ X \in U_{xy} $. Since $ T $ commutes with the action of $ W$, it follows
\begin{align*}
	\sigma(x) T(X)  \sigma(x)^{\sf T} = T(\sigma(x) X \sigma(x)^{\sf T}) = - T(X) = \sigma(y) X \sigma(y)^{\sf T},
\end{align*}
hence $ T(U_{xy})  \subseteq U_{xy} $.
Now, consider the $ M(x,y) $ with $ x = e_1+e_3 $ and $ y = e_3+e_4 $ and assume that
$  X = \sum_{1 \le i \le j \le n} c_{ij} M(e_i,e_j)  \in U_{xy}$.
Due to
\[
\sigma(x) e_i = \begin{cases}
	-e_i, & \text{if } i = 1,2  \\
	e_i, & \text{otherwise}
\end{cases}
\qquad
\sigma(y) e_i = \begin{cases}
	-e_i, & \text{if } i = 3,4  \\
	e_i, & \text{otherwise}
\end{cases}
\]
it follows that
\begin{align*}
	-X = \sigma(x) X \sigma(x)^{\mathsf{\top}} =& M(e_1,e_2) + M(e_1,e_1) + M(e_2,e_2)   \\
	&-\sum_{i > 2} c_{2i} M(e_1,e_i) + c_{1i} M(e_2,e_i)    + \sum_{i,j > 2} c_{ij} M(e_i,e_j),
\end{align*}
hence $ c_{1i} = c_{2i} $ and $ c_{ij} = 0 $ for all other cases.
Acting with $ \sigma(y) $ on $ X $ yields
\begin{align*}
	- X &=  \sigma(y) \Big (\sum_{i > 2} c_{1i}    (M(e_1,e_i ) + M(e_2,e_i)  )    \Big) \sigma(y)^{\mathsf{\top}}    \\
	&= -c_{14}M(e_1,e_3) - c_{13}  M(e_1,e_4) - c_{14} M(e_2,e_3) - c_{13} M(e_2,e_4) + \sum_{i > 5} c_{1i} M(e_1,e_i),
\end{align*}
so $ c_{14} = c_{13} $ and $ c_{1i} = 0 $ for $ i \neq 3,4 $. Hence, $ X = cM(x,y) $ for some constant $ c \in \R $ and $ U_{xy} $ is one-dimensional. As $ T(U_{xy}) \subseteq U_{xy} $, this shows that $ M(x,y) $ is an eigenvector of the intertwiner $ T $.
The argument for general orthogonal roots $ x,y \in D_n$ follows in the same manner.

\subsection{$ E_n $} \label{subsec:E-n}

To give an elementary proof that $ \mathcal{T}_0^n $ is irreducible with respect to the action of $ W(E_n) $, we will
use \ref{peter-weyl-ii} of Theorem \ref{theorem:Peter-Weyl-For-Sn}.

 In all three cases we consider the embedding of the root system $ E_n $
into $ \R^8 $, as defined in Section~\ref{subsec:Root-Systems}.  For
$ n \in \{6,7\} $ the space $ \mathcal{T}^n_0 $ embeds into
$ \lspan \{xx^{\sf T} \, : \, x \in E_n \} \subset \mathcal{S}^8$ via
\begin{align*}
	\mathcal{T}^n_0  \cong \begin{cases}
		\{ X \in \mathcal{T}^8_0 \, : \, X(e_7-e_8) = 0, \, X(e_6-e_7) = 0 \}, &\text{if } n = 6  \\
		\{ X \in \mathcal{T}^8_0 \, : \, X(e_7-e_8) = 0 \}, &\text{if } n = 7.
	\end{cases}
\end{align*}
Further, we embed the root systems $ D_n $ for $ n \le 8 $ into $ \R^8 $ by adding zero coordinates to the roots.
Let $ D_{s_n} $ be the largest root system of type $ D $ that is contained in $ E_n $, that is $ D_{s_6} = D_5, \, D_{s_7} = D_6 $ and $ D_{s_8} = D_8 $.
Since $ W(D_{s_n}) $ is a subgroup of the Weyl group $ W(E_n) $, Schur's lemma implies that every intertwiner $ T $ with respect to $ W(E_n) $ is a scalar multiple of the identity on $ U_i(D_{s_n}) $.The intertwiner commutes with the group action, thus, it is also a scalar multiple on
\[
W(E_n) \cdot U_i(D_{s_n}) := \{  SXS^{\mathsf{\top}} \, : \, X \in U_i(D_{s_n}) \},
\]
so $W(E_n) \cdot U_i(D_{s_n})   $ is an irreducible subspace for the action of $ W(E_n) $. Hence, to prove the irreducibility of $ \mathcal{T}_0^n $, it suffices to prove that
\begin{equation}\label{eq:SufficientConditionIrredEn}
	\mathcal{T}_0^n = W(E_n) \cdot U_2(D_{s_s}).
\end{equation}
First, we show that the two orbits $ W(E_n) \cdot U_i(D_{s_s}) $ collapse to one subspaces under the action of $ W(E_n) $:
\begin{lemma}
	\label{lemma:TwoOrbitsBecomeOne}
	It holds that $ U_1(D_{s_n}) \subset W(E_n) \cdot U_2(D_{s_n}) $ and
	in particular,
	\begin{align*}
		T^{s_n}_0 \cong \lspan  \{ U_1(D_{s_n}),U_2(D_{s_n}) \} \subset  W(E_n) \cdot U_2(D_{s_n}),
	\end{align*}
	where the first equivalence is a consequence of Theorem~\ref{theorem:Peter-Weyl-For-Sn} \ref{peter-weyl-ii}.
\end{lemma}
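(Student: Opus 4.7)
The plan is to reduce the claimed inclusion to an existence statement: it suffices to find one nonzero element of $U_1(D_{s_n}) \cap \lspan(W(E_n) \cdot U_2(D_{s_n}))$. Indeed, since $W(D_{s_n}) \subset W(E_n)$, the space $\lspan(W(E_n) \cdot U_2(D_{s_n}))$ is $W(D_{s_n})$-invariant, hence so is its intersection with $U_1(D_{s_n})$; and by part~\ref{peter-weyl-ii} of Theorem~\ref{theorem:Peter-Weyl-For-Sn}, $U_1(D_{s_n})$ is $W(D_{s_n})$-irreducible (applicable because $s_n \in \{5,6,8\}$, all $\geq 5$). The ``in particular'' conclusion then comes for free from $U_2(D_{s_n}) \subset W(E_n) \cdot U_2(D_{s_n})$ and the decomposition $\mathcal{T}^{s_n}_0 \cong U_1(D_{s_n}) \oplus U_2(D_{s_n})$.

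The nonzero element will be produced by reflecting a traceless diagonal matrix by a half-integer root. Fix $D = e_1 e_1^{\sf T} - e_2 e_2^{\sf T} \in U_2(D_{s_n})$. For any half-integer root $x = \tfrac{1}{2}\sum_i \epsilon_i e_i$ of $E_n$, every $x_i^2 = 1/4$, and combined with $\Tr D = 0$ this forces $x^{\sf T} D x = 0$. Expanding $\sigma_x = I - xx^{\sf T}$ then gives
\[
    \sigma_x D \sigma_x^{\sf T} = \tfrac{1}{2} D + O_x,
\]
where $O_x$ is off-diagonal with entries $(O_x)_{ij} = -(d_i + d_j)\epsilon_i\epsilon_j/4$ for $i \neq j$. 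Thus $O_x = \sigma_x D \sigma_x^{\sf T} - \tfrac{1}{2} D$ lies in $\lspan(W(E_n) \cdot U_2(D_{s_n}))$. For $n = 8$ we have $s_n = 8$, so $O_x$ is automatically in $U_1(D_8)$; the all-plus choice of $\epsilon$ makes it nonzero, and the case is done.

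For $n \in \{6,7\}$, however, $O_x$ may carry nonzero entries in rows and columns $s_n + 1, \ldots, 8$, i.e.\ outside $U_1(D_{s_n})$. The remedy is to sum $O_{x^{(1)}} + O_{x^{(2)}}$ over a suitably chosen pair of half-integer roots of $E_n$, namely with $\epsilon^{(2)}_j = -\epsilon^{(1)}_j$ for $j > s_n$, so that the cross-block entries cancel while the desired in-block entries double. For $E_7$, flipping only $\epsilon_7$ and $\epsilon_8$ keeps us in $E_7$ (it preserves $\epsilon_7 = \epsilon_8$ and changes the minus-count by $2$) and makes the construction work directly. For $E_6$, flipping $\epsilon_6 = \epsilon_7 = \epsilon_8$ alone changes the minus-count parity by $3$ and would leave $E_8$; one must compensate with an extra flip in positions $\{3,4,5\}$ (not in $\{1,2\}$, so as not to disturb the entries of $D$ we are tracking). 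A concrete working pair is
\[
    \epsilon^{(1)} = (+,+,+,+,+,+,+,+), \qquad \epsilon^{(2)} = (+,+,-,+,+,-,-,-),
\]
both half-integer roots of $E_6$; a direct calculation shows that $O_{x^{(1)}} + O_{x^{(2)}}$ has nonzero entries only at $(1,4), (1,5), (2,4), (2,5)$, giving the sought element of $U_1(D_5) \setminus \{0\}$.

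The main obstacle is precisely this parity-compensating flip in the $E_6$ case: it must be placed at a coordinate where the corresponding column of $D$ vanishes, for otherwise the flip distorts the $\{1,2\} \times \{3,4,5\}$ entries that we need to survive rather than cancelling the cross-block contributions. Once all three concrete verifications are in hand, the $W(D_{s_n})$-irreducibility of $U_1(D_{s_n})$ upgrades the single nonzero element to the full inclusion $U_1(D_{s_n}) \subset \lspan(W(E_n) \cdot U_2(D_{s_n}))$, completing the proof.
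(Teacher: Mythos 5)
Your proof is correct and takes essentially the same route as the paper: both arguments reduce the lemma to exhibiting one nonzero element of $U_1(D_{s_n})$ inside the span of $W(E_n)\cdot U_2(D_{s_n})$ and then upgrade via the $W(D_{s_n})$-irreducibility of $U_1(D_{s_n})$ from Theorem~\ref{theorem:Peter-Weyl-For-Sn}~\ref{peter-weyl-ii}. The only difference is the witness: the paper applies a product of two reflections in half-integer roots to $M(e_1+e_2,e_1-e_2)\in U_2(D_{s_n})$ and lands exactly on $-M(e_1+e_2,e_3-e_4)\in U_1(D_{s_n})$ uniformly in $n$, whereas you form the linear combinations $\sigma(x)D\sigma(x)^{\sf T}-\tfrac{1}{2}D$ and arrange the out-of-block cancellations case by case for $n=6,7,8$.
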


The lemma already shows the identity \eqref{eq:SufficientConditionIrredEn} for
$ n = 8 $ and therefore the irreducibility of $ \mathcal{T}^8_0 $ with
respect to the action of $ W(E_8) $.

\begin{proof}
	It suffices to show that for $ M(e_1+e_2,e_3-e_4) \in U_1(D_{s_n})$ and $  M(e_1+e_2,e_1-e_2) \in U_2(D_{s_n})$ it holds that
	\begin{align*}
		M(e_1+e_2,e_3-e_4)  \in W(E_n)\cdot M(e_1-e_2,
		e_1+e_2).
	\end{align*}
	We have
	\begin{align*}
		e_1-e_2 = (1,-1,0,0,0,0,0,0) \quad \overset{\sigma(x_1)}{\longmapsto} \quad  &\frac{1}{2}  (1, -1, -1, 1, 1, 1, 1, 1) =:   y  \\
		\textup{for}  \quad
		x_1 = &\frac{1}{2}  (1, -1, 1, -1, -1, -1, -1, -1)   \in E_n.
	\end{align*}
	Moreover,
	\begin{align*}
		y =\frac{1}{2}  (1, -1, -1, 1, 1, 1, 1, 1)   \quad \overset{\sigma(x_2)}{\longmapsto}   \quad   &(0,0,-1,1,0,0,0,0) = -(e_3-e_4)      \\
		\textup{for}  \quad x_2 =&\frac{1}{2}(1,-1,1,-1,1,     1,1,1)   \in E_n.
	\end{align*}
	Since both $ \sigma(x_1) $ and $ \sigma(x_2) $ stabilize $ e_1+e_2 $, it follows that
	\[
	\sigma(x_2)\sigma(x_1)M(e_1+e_2,e_1-e_2)
	\sigma(x_1)^{\sf T} \sigma(x_2)^{\sf T} =
	-M(e_1+e_2,e_3-e_4).  \qedhere
	\]
\end{proof}

It remains to prove \eqref{eq:SufficientConditionIrredEn} for
$ n \in \{6,7\} $.
\begin{proposition}
	We have
	\begin{align*}
		\dim W(E_n) \cdot U_2(D_{s_n})  \ge \dim \mathcal{T}^{s_n}_0
		+n  = \dim \mathcal{T}^n_0,
	\end{align*}
	so $
	W(E_n)\cdot U_2(D_{s_n}) \cong \mathcal{T}^n_0$.
\end{proposition}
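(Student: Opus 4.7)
The plan is to produce $n$ additional linearly independent elements in the orbit $W(E_n) \cdot U_2(D_{s_n})$ beyond $\mathcal{T}^{s_n}_0$. By Lemma~\ref{lemma:TwoOrbitsBecomeOne} we already have $\mathcal{T}^{s_n}_0 \subset W(E_n) \cdot U_2(D_{s_n})$, and the reverse inclusion $W(E_n) \cdot U_2(D_{s_n}) \subset \mathcal{T}^n_0$ is immediate since $U_2(D_{s_n})$ is traceless and the $E_n$-subspace of $\R^8$ is $W(E_n)$-invariant. It therefore suffices to exhibit $n$ orbit elements whose images in $\mathcal{T}^n_0 / \mathcal{T}^{s_n}_0$ are linearly independent.

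For $n \in \{6,7\}$, the orthogonal complement of $\lspan\{e_1,\ldots,e_{s_n}\}$ inside the $E_n$-subspace is spanned by a single vector $u_n$ (with $u_6 = e_6+e_7+e_8$ and $u_7 = e_7+e_8$). Identifying $\mathcal{T}^n_0/\mathcal{T}^{s_n}_0$ with the orthogonal complement of $\mathcal{T}^{s_n}_0$ inside $\mathcal{T}^n_0$ and unpacking the defining constraints $X(e_7-e_8)=0$ (and, for $n = 6$, also $X(e_6-e_7)=0$), I would show that this complement splits as a $W(D_{s_n})$-representation into $B_n \perp C_n$, where $B_n := \lspan\{M(e_i,u_n) : 1 \le i \le s_n\}$ is the $s_n$-dimensional standard signed-permutation representation of $W(D_{s_n})$ on $\R^{s_n}$ (irreducible since $s_n \ge 5$) and $C_n$ is the one-dimensional trivial representation spanned by the trace-corrected $u_n u_n^\tp$. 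Since the orbit is $W(D_{s_n})$-invariant and $B_n$, $C_n$ are inequivalent irreducibles, Schur's lemma reduces the task to producing one orbit element with nonzero projection onto $B_n$ and another with nonzero projection onto $C_n$.

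For $B_n$, take any nonzero $X = \mathrm{diag}(d_1,\ldots,d_{s_n},0,\ldots,0) \in U_2(D_{s_n})$ and any half-integer root $\alpha = \tfrac{1}{2}(\epsilon_1,\ldots,\epsilon_8) \in E_n \setminus D_{s_n}$. Since $\alpha^\tp X \alpha = \tfrac{1}{4}\Tr X = 0$, the conjugation simplifies to $\sigma(\alpha) X \sigma(\alpha)^\tp = X - M(X\alpha,\alpha)$, and its $(i,j)$-entry for $i \le s_n, j > s_n$ equals $-\tfrac{1}{4} d_i \epsilon_i \epsilon_j$, which is manifestly nonzero. For $C_n$, the projection is detected by the scalar $u_n^\tp (SXS^\tp) u_n = (S^{-1}u_n)^\tp X (S^{-1}u_n)$. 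I would take $S = \sigma(\alpha_2)\sigma(\alpha_1)$ for two orthogonal half-integer roots $\alpha_1 \perp \alpha_2$ in $E_n \setminus D_{s_n}$ --- for instance in $E_6$ the pair $\alpha_1 = \tfrac{1}{2}(-1,-1,1,1,1,1,1,1)$, $\alpha_2 = \tfrac{1}{2}(1,1,-1,-1,1,1,1,1)$ --- and verify by a short direct computation with $X = \mathrm{diag}(1,1,1,1,-4,0,0,0)$ that this scalar does not vanish (one finds $S^{-1}u_6 = (0,0,0,0,-\tfrac{3}{2},-\tfrac{1}{2},-\tfrac{1}{2},-\tfrac{1}{2})$ and the scalar equals $-9$).

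The main obstacle is the $C_n$ calculation: the orthogonality $\alpha_1 \perp \alpha_2$ is essential. For generic pairs with $\alpha_1 \cdot \alpha_2 = 1$ a short calculation shows that $X_1 := \sigma(\alpha_1) X \sigma(\alpha_1)^\tp$ has $\alpha_2$ as an eigenvector, so $\sigma(\alpha_2)$ commutes with $X_1$ and hence $SXS^\tp = X_1$; but any single-reflection conjugate of $X$ has vanishing $(j,k)$-block for $j,k > s_n$ (since $X e_j = 0$ for $j > s_n$ forces $(X\alpha)_j = 0$), so the $C_n$-component collapses to zero. Orthogonal half-integer root pairs do exist in $E_n \setminus D_{s_n}$ (take disjoint pairs of minus signs among the first $s_n$ coordinates), which circumvents this obstruction. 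Combining the two steps yields $B_n \oplus C_n \subset W(E_n) \cdot U_2(D_{s_n})$, hence $\mathcal{T}^n_0 \subset W(E_n) \cdot U_2(D_{s_n})$, establishing the claimed dimension inequality.
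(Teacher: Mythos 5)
Your proposal is correct, but it follows a genuinely different route from the paper. The paper argues directly with roots: for each root $z\in E_n\setminus D_{s_n}$ it produces $x,y\in D_{s_n}$ and $S=\sigma(z')\in W(E_n)$ with $Sx=z$, $Sy=y$, so that $zz^{\sf T}-yy^{\sf T}$ lies in the span of the orbit; choosing $n$ linearly independent roots $z_1,\dots,z_n\in E_n\setminus D_{s_n}$, the matrices $z_iz_i^{\sf T}-y_iy_i^{\sf T}$ are seen to be independent modulo $\mathcal{T}^{s_n}_0$ because their last rows are $\pm\tfrac12 z_i$, which immediately gives the dimension count. You instead exploit more representation theory: you identify the orthogonal complement of $\mathcal{T}^{s_n}_0$ in $\mathcal{T}^n_0$ as $B_n\perp C_n$ with $B_n\cong\R^{s_n}$ the reflection representation of $W(D_{s_n})$ and $C_n$ trivial, so that inequivalence plus invariance of the orbit span reduces the whole problem to exhibiting just two witnesses, which you compute explicitly via the identity $\sigma(\alpha)X\sigma(\alpha)^{\sf T}=X-M(X\alpha,\alpha)$ for half-integer roots. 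Your decomposition and both computations check out (your $\alpha_1,\alpha_2$ do lie in $E_6$, are orthogonal, and $S^{-1}u_6=(0,0,0,0,-\tfrac32,-\tfrac12,-\tfrac12,-\tfrac12)$ gives the value $-9$); your approach trades the paper's $n$ explicit elements for two, at the cost of first establishing the $W(D_{s_n})$-decomposition of the complement. Two small points worth tightening: the $(i,j)$-entry $-\tfrac14 d_i\epsilon_i\epsilon_j$ is nonzero only for indices $i$ with $d_i\neq 0$ (some such $i$ exists since $X\neq 0$), and the $E_7$ instance of the $C_n$ computation is only indicated, though the same orthogonal pair $\alpha_1,\alpha_2$ works there with, e.g., $X=\mathrm{diag}(1,1,1,1,1,-5,0,0)$.
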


\begin{proof}
	We identify $ \mathcal{T}^{s_n}_0 $ with the space of all traceless symmetric matrices whose last $ n-s_n $ rows respectively columns are zeros.
	As a consequence of Lemma~\ref{lemma:TwoOrbitsBecomeOne}, $ \mathcal{T}^{s_n}_0 \subset W(E_n) \cdot U_2(D_{s_n}) $, so it suffices to find $ n $ matrices $ X_1,\ldots,X_n \in (W(E_n) \cdot M(x,y)  )\setminus   \mathcal{T}^{s_n}_0 $ such that
	\begin{align}  \label{eq:adding-elements-to-W(E_n)-orbit}
		\dim \lspan  \{ \mathcal{T}^{s_n}_0, X_1,\ldots,X_n\}
		= \dim \mathcal{T}^{s_n}_0  +n.
	\end{align}
	Therefore, observe that for each root $ z \in E_n \setminus D_{s_n} $ we can find a tuple of roots $ x,y \in D_{s_n} $ and an element $ S \in W(E_n)$ such that
	\begin{align}  \label{eq:LeaveT_{s_n,0}OverWeylGroup}
		Sx = z \quad \text{and}  \quad Sy = y.
	\end{align}
	The action of $ S $ maps $ xx^{\sf T} -yy^{\sf T} \in \mathcal{T}^{s_n}_0 $ to $ zz^{\sf T} - yy^{\sf T} \notin \mathcal{T}^{s_n}_0  $.
	To see \eqref{eq:LeaveT_{s_n,0}OverWeylGroup}, if $ z = \frac{1}{2}(a_1,\ldots,a_8) \in  E_n \setminus D_{s_n} $ with $ a_i \in \{\pm 1\} $, choose
	\begin{align*}
		x &= (a_1,a_2,0,\ldots,0), \quad   y = (a_1,-a_2,0,\ldots,0) \quad \text{and}  \\
		S &= \sigma(z')  \quad \text{with}  \quad z' =\frac{1}{2} (a_1,a_2,-a_3,\ldots,-a_8).
	\end{align*}
	Then, one can directly verify that $ Sx = z $ and $ Sy = y $.

	Now, choose a set of linearly independent roots
	$ z_1,\ldots,z_n \in E_{n} \setminus D_{s_n} $.  Such a set
	exists, for example, take the roots
	$ z_1,\ldots,z_n \in E_n \setminus D_{s_n}$ such that the
	$ i $-th and $ (i+1) $-th entry of root $ z_i $ for
	$ 1 \le i \le n-1 $ are negative and the remaining entries
	positive, and for $ z_n $ we set the first and the $ n $-th
	entry to be negative and the remaining ones positive.

	Additionally, choose $ y_1,\ldots,y_n \in D_{s_n} $. Then the
	matrices $ X_i = z_iz_i^{\sf T} - y_iy_i^{\sf T} $ lie in
	$ W(E_n) \cdot U_2(D_{s_n}) \setminus \mathcal{T}^{s_n}_0 $.  These
	matrices are linearly independent since the last row of
	$ z_iz_i^{\sf T} - y_iy_i^{\sf T} $ is given by the vector
	$ \pm 1/2 z_i $ and vectors $ z_i $ were chosen to be linearly
	independent. Since the last row of every matrix in
	$ \mathcal{T}^{s_n}_0 $ consists of only zeros, it follows
	that adding these vectors to $ \mathcal{T}^{s_n}_0 $ increases
	the dimension of their joint span by $ n $, which proves
	\eqref{eq:adding-elements-to-W(E_n)-orbit}.
\end{proof}

\subsubsection{Proof of Theorem \ref{Theorem:Eigenvalues}}

To prove Theorem~\ref{Theorem:Eigenvalues} it remains to compute
 \[
 Q[A] = \lambda \Tr A^2
 \]
 for $A$ contained in one of the spaces given in Theorem
 \ref{theorem:Peter-Weyl-For-Sn}.

We first evaluate $Q$ at the identity matrix. We have
\[
 Q[I_n] = \sum_{r\in R} (r^{\sf T}r)^2 = 4 |R|,
\]
and using $\Tr I_n = n$ we see that $\lambda = 4h$, where $h$ is the
Coxeter number of the root system $R$.

Note that for $ R = A_n $ or $ R = D_n $ we can find $ x,y \in R $ with $ x \cdot y = 0 $ and $\{x,y\}\neq\{e_i -e_j,e_i +e_j\}$ such that $ M(x,y) \in U_1(R) $. In the case of $ D_4 $ we can find such an element $ M(x,y) $ in both of the two irreducible subspaces decomposing $ U_1(D_4) $.
Then
 \begin{align*}
    Q[M(x,y)] &=\sum_{r\in R} M(x,y)[r]^2 \\
            & = 4\sum_{r\in R}(x\cdot r)^2(y\cdot r)^2.
\end{align*}
We only have to consider roots $r$, with $r\cdot x \neq 0$ and
$r\cdot y \neq 0$, which implies $(r\cdot x)^2 = (r\cdot y)^2 = 1$. For
$R=A_n$ we can find $8$ roots fulfilling this condition, for $R = D_n$
there are $16$.  Hence,
\[
  Q[M(x,y)] =
  \begin{cases}
    32 & \text{for } R = A_n,\\
    64 & \text{for } R = D_n.
  \end{cases}
\]
For the matrices $M(e_i -e_j,e_i +e_j) \in U_2(D_n)$, the result is similarly
\[
Q[M(e_i -e_j,e_i +e_j)] = 4\sum_{r\in D_n}((e_i +e_j)\cdot r)^2((e_i -e_j)\cdot r)^2.
\]
If $r= \pm e_i \pm e_j$, the summand is zero. Otherwise, if
$(r\cdot (e_i +e_j))^2 =1$, it follows $(r\cdot (e_i -e_j))^2=1$, and
there are exactly $8(n-2)$ such roots $r \in D_n$.  Hence,
$Q[M(e_i -e_j,e_i +e_j)] = 32(n-2)$.

In all three cases, the normalizing factor is
\[
\Tr M(x,y)^2 = 2 (x\cdot x) (y\cdot y) + 2(x\cdot y)^2 = 8.
\]
So we obtain eigenvalues $ 4 $ on $ U_1(A_n) $, respectively $ 8 $
and $ 4(n-2) $ on $ U_1(D_n) $ and $ U_2(D_n) $.

For $R = A_n$ we have to compute the eigenvalue for $ U_2(A_n) $, so we
may evaluate $ Q(P_1) $. Observe that
\[
P_1[r]^2=\left( \sum_{j\in\{2,\ldots,n+1\}} ((e_1-e_j)\cdot r)^2-4\right)^2.
\]
If $r=\pm (e_1-e_j)$ for some $ j \in \{2,\ldots,n+1\} $, then we get $(r,r)^2=4$ and $((e_1-e_j)\cdot r)^2=1$ for all other $j$. This amounts to
\[
P_1[r]^2 = \left( 4 + (n-1)- 4 \right)^2 = (n-1)^2.
\]
If $r=(e_k- e_l)$ with $k,l\neq 1$, it follows $(r\cdot (e_1-e_k))^2=(r\cdot (e_1-e_l))^2=1$ and all other summands are zero. So we get
\[
P_1[r]^2 = (2-4)^2 = 4.
\]
There are $2n$ roots of type $ \pm (e_1-e_j) $ and accordingly
$n(n-1)$ of type $ (e_k-e_l) $ with $ k,l\neq 1 $. This results in
\[
Q[P_1] = 2n(n-1)^2+4n(n-1) = 2n(n-1)(n+1).
\]
Now we compute $\Tr P_1^2 = \langle P_1, P_1 \rangle$ and get
\begin{align*}
	 \langle P_1,P_1\rangle  &= \left \langle \sum_{j \in \{2,\ldots,n+1\}} M(e_1-e_j) -2I_n, \, \sum_{j \in \{2,\ldots,n+1\}} M(e_1-e_j) - 2I_n \right \rangle    \\
	&= \sum_{j,k \in \{2,\ldots,n+1\}} \langle M(e_1-e_j),
   M(e_1-e_k) \rangle  - 4 \sum_{j \in \{2,\ldots,n+1\}} \langle
   M(e_1-e_j), I_n \rangle + 4n\\
	&= \sum_{j,k \in \{2,\ldots,n+1\}} ((e_1-e_k)\cdot (e_1-e_l) )^2  - 4n.
\end{align*}
The first sum equals
\begin{align*}
	\sum_{j\in\{2,\ldots,n+1\}} ((e_1-e_j)\cdot (e_1-e_j))^2+ \sum_{2 \le j \neq k \le n+1} ((e_1-e_j)\cdot (e_1-e_k))^2 =4n+n(n-1).
\end{align*}
Hence, together we have $\langle P_1,P_1\rangle = n(n-1)$ and the
eigenvalue associated with the eigenspace $ U_2(A_n) $ is $2(n+1)=2h$.

The remaining eigenvalues for $E_6,E_7,E_8$ are given by the fact that
these root systems form spherical $4$-designs.  Then, by
\eqref{eq:spherical-4-design-quadratic-form},
$Q[H] = \frac{8h}{n+2}\Tr H^2$. \qed

\subsection{Orthogonal sum of irreducible root systems}

In this section, we want to compute the eigenvalues of the quadratic
form $ Q $ on the orthogonal sum of irreducible root systems
$ R=R_1\perp\ldots \perp R_m$. For this we write
\[
Q_{R_i}[H] = \sum_{x\in R_i} H[x]^2
\]
to distinguish between the quadratic form on different root systems
$R_i$. Let $n_i$ be the rank of $R_i$. Furthermore, let
$n=n_1+\cdots + n_m$. Write each $x\in \R^n$ as $(x_1,\ldots,x_m)$
with $x_i\in \R^{n_i}$ and every root $r\in R$ as
$(0,\ldots,0,r_i,0,\ldots,0)$ with $r_i\in R_i$ and $0\in \R^{n_j}$
accordingly. To compute the eigenvalues of $Q_R$ on $\mathcal{S}^n$,
we identify $\mathcal{S}^n$ in a similar fashion: Each
$H\in \mathcal{S}^n$ can be seen as a vector of block matrices

\begin{equation}\label{eq:identification-S^n-matrix}
H \cong  (H_{1,1},\ldots,H_{m,m},H_{1,2},\ldots,H_{m-1,m}) \; \Longleftrightarrow \; H = \begin{pmatrix}
H_{1,1} & H_{1,2} & \cdots & H_{1,m} \\
H_{1,2}^{\sf T} & H_{2,2} & \cdots & H_{2,m} \\
\vdots & \vdots & \ddots & \vdots \\
H_{1,m}^{\sf T} & H_{2,m}^{\sf T} & \cdots & H_{m,m}
\end{pmatrix} ,
\end{equation}
where $H_{i,i}\in \mathcal{S}^{n_i}$ and $H_{i,j}\in \R^{n_i\times n_j}$ for $i\neq j$. This way, we identify
\begin{equation}\label{eq:identification-S^n}
\mathcal{S}^n \cong \mathcal{S}^{n_1}\perp \ldots \perp \mathcal{S}^{n_m}\perp \bigperp_{1\leq i<j\leq m} \R^{n_i\times n_j}.
\end{equation}
Furthermore, let $ \mathcal{D} $ be the $ m $-dimensional space that is spanned by the diagonal matrices
\[
(I_{n_1},0,\ldots,0), (0,I_{n_2},0,\ldots,0), \ldots, (0,\ldots,0,I_{n_m},0,\ldots,0).
\]

We are particularly interested in the case where each component of the
root system $ R $ has the same Coxeter number. In this case $ R $ is
of the form
\begin{align}\label{eq:same-coxeter-number}
	R =  (A_{n_a})^{m_a} \perp (D_{n_d})^{m_d}  \perp (E_{n_e})^{m_e},
\end{align}
where $ (A_{n_a})^{m_a}, \, (D_{n_d})^{m_d} $ respectively
$ (E_{n_e})^{m_e} $ are orthogonal sums of $ m_a, m_d$ respectively
$ m_e $ irreducible roots systems $ A_{n_a}, D_{n_d}$ respectively
$E_{n_e} $, and $m=m_a+m_d+m_e$, $n=m_an_a+m_dn_d+m_en_e$.

\begin{theorem}\label{theorem:orthogonal sum}
  Let $R=\bigperp_{i=1}^m R_i$ be the orthogonal sum of irreducible
  root systems $R_i\in\{A_{n_i},D_{n_i},E_{n_i }\}$, where $n_i$ is
  the rank of $R_i$. We identify $\mathcal{S}^n$ as in
  \eqref{eq:identification-S^n}.
\begin{enumerate}[label=(\roman*)]

\item \label{item:composite-root-system-i}
	We have
	\begin{equation}\label{eq:split-quadratic-form}
	Q_R[H] = Q_{R_1}[H_{1,1}] + \cdots + Q_{R_m}[H_{m,m}],
	\end{equation}
	so the quadratic form only depends on the diagonal entries
        $H_{i,i}\in \mathcal{S}^{n_i}$ and the eigenvalues of $Q_R$
        are the eigenvalues of all $Q_{R_i}$ and additionally the
        eigenvalue $0$ with multiplicity
        $\sum_{1\leq i<j\leq m} n_i n_j$.

      \item\label{item:composite-root-system-ii} If each component
        root system has the same Coxeter number $h$, we can write $R$
        as in \eqref{eq:same-coxeter-number}. The space of traceless
        matrices $\Tcal_0^n$ then decomposes into eigenspaces of
        $Q_R$:
\begin{equation}
  \label{eq:split-traceless}
  \begin{split}
\Tcal_0^n \; =  \; & \phantom{\perp} \;\; U_1(A_{n_a})^{m_a} \perp U_2(A_{n_a})^{m_a}\\
	 & \perp U_1(D_{n_d}) ^{m_d}\perp U_2(D_{n_d}) ^{m_d}   \\
& \perp  (\Tcal_0^{n_e})^{m_e}\\
 &\perp \mathcal{D}\cap \Tcal_0^n,
      \end{split}
\end{equation}
where the exponents refer to the direct sum of the eigenspaces of
$ Q_{A_{n_a}}, Q_{D_{n_d}} $ and $ Q_{E_{n_e}} $.  The eigenspace
$\mathcal{D} \cap \Tcal_0^n$ belongs to the eigenvalue $4h$ and has
dimension $m-1$.
\end{enumerate}
\end{theorem}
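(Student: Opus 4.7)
The overall plan is to establish part~\ref{item:composite-root-system-i} by a direct block-matrix computation, then deduce part~\ref{item:composite-root-system-ii} by applying Theorem~\ref{theorem:Peter-Weyl-For-Sn} to each irreducible factor and invoking the common Coxeter number hypothesis to glue the one-dimensional identity pieces into a single eigenspace.

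For part~\ref{item:composite-root-system-i}, the key observation is that each root $r\in R$ lies entirely in one of the orthogonal components: if $r\in R_i$ then, under the identification in \eqref{eq:identification-S^n-matrix}, only the $i$-th block of coordinates of $r$ is non-zero, so $H[r] = r_i^{\sf T} H_{i,i}\, r_i = H_{i,i}[r_i]$. Summing over $r\in R$ and grouping by component gives \eqref{eq:split-quadratic-form} immediately. The eigenvalue statement is then a formal consequence: the off-diagonal subspace $\bigperp_{1\leq i<j\leq m}\R^{n_i\times n_j}$, of dimension $\sum_{i<j} n_i n_j$, lies in the kernel of $Q_R$, and on the diagonal subspace $\mathcal{S}^{n_1}\perp\cdots\perp\mathcal{S}^{n_m}$ the form decouples as an orthogonal direct sum of the $Q_{R_i}$'s, so its spectrum (with multiplicities) is the disjoint union of the spectra of the $Q_{R_i}$.

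For part~\ref{item:composite-root-system-ii}, I would use Theorem~\ref{theorem:Peter-Weyl-For-Sn} to decompose each $\mathcal{S}^{n_i}$ orthogonally as $\lspan\{I_{n_i}\}\perp U_1(R_i)\perp U_2(R_i)$, or $\lspan\{I_{n_i}\}\perp\Tcal_0^{n_i}$ when $R_i=E_{n_i}$. Embedded diagonally into $\mathcal{S}^n$, each of these subspaces is, by part~\ref{item:composite-root-system-i}, an eigenspace of $Q_R$ with the eigenvalue already determined in Theorem~\ref{Theorem:Eigenvalues}. The identity pieces $\lspan\{I_{n_i}\}$ together span the $m$-dimensional space $\mathcal{D}$, and a short computation gives
\[
Q_{R_i}[I_{n_i}] \;=\; \sum_{r\in R_i}(r\cdot r)^2 \;=\; 4|R_i| \;=\; 4h\, n_i \;=\; 4h\,\Tr(I_{n_i}^2),
\]
using $|R_i|=h\,n_i$ from the common Coxeter number hypothesis. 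Hence every $I_{n_i}$ is an eigenvector of $Q_R$ with the same eigenvalue $4h$, so the entire space $\mathcal{D}$ is an eigenspace of $Q_R$ with eigenvalue $4h$. Intersecting with $\Tcal_0^n$ imposes the single linear constraint $\sum_{i=1}^m \alpha_i n_i=0$ on $\sum_i \alpha_i I_{n_i}\in\mathcal{D}$, so $\mathcal{D}\cap\Tcal_0^n$ has dimension $m-1$.

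The main subtlety, and the only place where the hypothesis really bites, is that the coincidence of the Coxeter numbers is exactly what merges the individually one-dimensional $\lspan\{I_{n_i}\}$ pieces into a single eigenspace with a uniform eigenvalue; without this hypothesis, $\mathcal{D}$ would split further into pieces carrying distinct values $4h_i$ and the $(m-1)$-dimensional claim would fail. A dimension count using Theorem~\ref{Theorem:Eigenvalues} for the sizes of the $U_j(R_i)$ and $\Tcal_0^{n_e}$ then confirms that the listed orthogonal sum exhausts the traceless part of the diagonal-block subspace, with the off-diagonal blocks from part~\ref{item:composite-root-system-i} accounting for the remainder of $\Tcal_0^n$ in the kernel of $Q_R$.
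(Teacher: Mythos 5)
Your proposal is correct and follows essentially the same route as the paper: part (i) by observing that each root lies in a single component so $H[r]=H_{i,i}[r_i]$, and part (ii) by combining the componentwise Peter--Weyl decomposition with the computation $Q_{R_i}[I_{n_i}]=4|R_i|=4h\,n_i$ to merge the identity pieces into the eigenspace $\mathcal{D}$ and cutting it with the trace-zero condition to get dimension $m-1$. Your closing remark that the off-diagonal blocks supply the remaining (kernel) part of $\Tcal_0^n$ is a welcome clarification that the paper leaves implicit.
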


\begin{remark}
  The decomposition \eqref{eq:split-traceless} does not change when
  $ D_4 $ is considered because the quadratic form has the same
  eigenvalues on both irreducible subspaces that decompose
  $ U_1(D_4) $.
\end{remark}

\begin{proof}
  \ref{item:composite-root-system-i} Let $H\in \mathcal{S}^n$. We
  write $H$ as in \eqref{eq:identification-S^n-matrix}. For a root
  $r=(0,\ldots,0,r_i,0,\ldots,0)\in R$ it follows
\[
H[r] = H_{i,i}[r_i],
\]
so $H[r]$ does not depend of the off-diagonal entries $(0,\ldots,0,H_{i,j},0,\ldots,0)$ for $i\neq j$ of $H$.

Since every root in $R$ is of this form, this directly implies
\eqref{eq:split-quadratic-form}. This also shows that the eigenvalues
of $Q_R$ coincide with the eigenvalues of $Q_{R_i}$ with the same
multiplicity. The only additional eigenvalue we get is $0$, which is
obtained by evaluating $ Q_R[H] $ for matrices $H\in\mathcal{S}^n$,
where all diagonal entries $H_{ii}=0\in \mathcal{S}^{n_i}$. Due to the
identification \eqref{eq:identification-S^n}, the space of these
matrices has dimension $\sum_{1\leq i<j\leq m} n_i n_j$, which gives
the multiplicity of the eigenvalue $ 0 $.

\ref{item:composite-root-system-ii} If each component root system of
$R$ has the same Coxeter number, the space $\mathcal{D} $ is an
eigenspace of $ Q_R $ because
\begin{align*}
	Q_R\Big [(0,\ldots,0,I_{n_a},0\ldots,0) \Big]&= Q_{A_{n_a}}[I_{n_a}] = 4h,  \\
	Q_R\Big [(0,\ldots,0,I_{n_d},0\ldots,0) \Big]&= Q_{D_{n_d}}[I_{n_d}] = 4h,   \\
	Q_R\Big[(0,\ldots,0,I_{n_e},0\ldots,0) \Big]&= Q_{E_{n_e}}[I_{n_e}] = 4h.
\end{align*}
Hence, $ \mathcal{S}^n $ decomposes into eigenspaces of $ Q_R $ as
\[
\mathcal{S}^n = U_1(A_{n_a})^{m_a} \perp U_2(A_{n_a})^{m_a}
\perp U_1(D_{n_d}) ^{m_d}\perp U_2(D_{n_d}) ^{m_d}
\perp (\Tcal^{n_e}_0)^{m_e}
\perp  \mathcal{D}.
\]
        All eigenspaces but $\mathcal{D}$ lie in the space
        $ \mathcal{T}^{n}_0 $, hence equation
        \eqref{eq:split-traceless} holds.  To see that
        $\mathcal{D} \cap \mathcal{T}^n_0$ has dimension $m-1$, note
        that it contains all diagonal matrices of the form
\[
(c_1I_{n_1},\ldots,c_m I_{n_m},0,\ldots,0)\quad \text{with} \quad c_1n_1+\cdots + c_mn_m = 0.
\]
Since $\mathcal{D}$ has dimension $m$, it follows that $ \mathcal{D} \cap \mathcal{T}^n_0 $ has dimension $ m-1 $.
\end{proof}

\section{Concrete results}
\label{sec:Results}

\subsection{Dimension 8}
\label{ssec:Dimension-8}

Mordell \cite{Mordell1938a} showed that the root lattice $E_8$ is the
only even unimodular lattice in dimension $8$. By \cite{Cohn2019a}
$E_8$ is universally optimal and unique among periodic point
configurations. The fact that it is a local minimum for all Gaussian
potential functions was established in
\cite{Sarnak2006a}. Coulangeon~\cite{Coulangeon2006a}
used~\eqref{eq:Coulangeon-Hessian} to provide an alternative proof.

\subsection{Dimension 16}
\label{ssec:Dimension-16}

Witt \cite{Witt1941a} proved that there exist exactly two even
unimodular lattices in dimension $16$: $D^+_{16}$ and $E_8 \perp
E_8$. Both lattices have the same theta series $E_4^2$, but their root
systems differ as we have $D^+_{16}(2) = D_{16}$ and, respectively,
$E_8 \perp E_8(2) = E_8 \perp E_8$. The eigenvalues of the quadratic
form~\eqref{eq:crucial-quadratic-form} are by
Theorem~\ref{Theorem:Eigenvalues} and
Theorem~\ref{theorem:orthogonal sum}
\[
  8\; (120\times) , 56 \; (15 \times)\quad \text{respectively} \quad 0
  \; (64 \times) , 24 \; (70 \times), 120 \; (1 \times).
\]
Therefore, by Corollary~\ref{cor:large-alpha}, $D^+_{16}$ is a local
minimum for $f_\alpha$-potential energy whenever $\alpha$ is large
enough. By Corollary~\ref{cor:large-alpha} the other lattice
$E_8 \perp E_8$ is a saddle point whenever $\alpha$ is large
enough. The following numerical computations strongly suggest that
$E_8 \perp E_8$ is in fact a saddle point for all values of $\alpha$.

Using SageMath \cite{SageMath} we arrive at the following plot for the
eigenvalues of the Hessian of the function
$L \mapsto \mathcal{E}(f_\alpha, L)$ at $D_{16}^+$ and at
$E_8 \perp E_8$.

\begin{figure}[htb]
\begin{center}
\includegraphics[width=6.2cm]{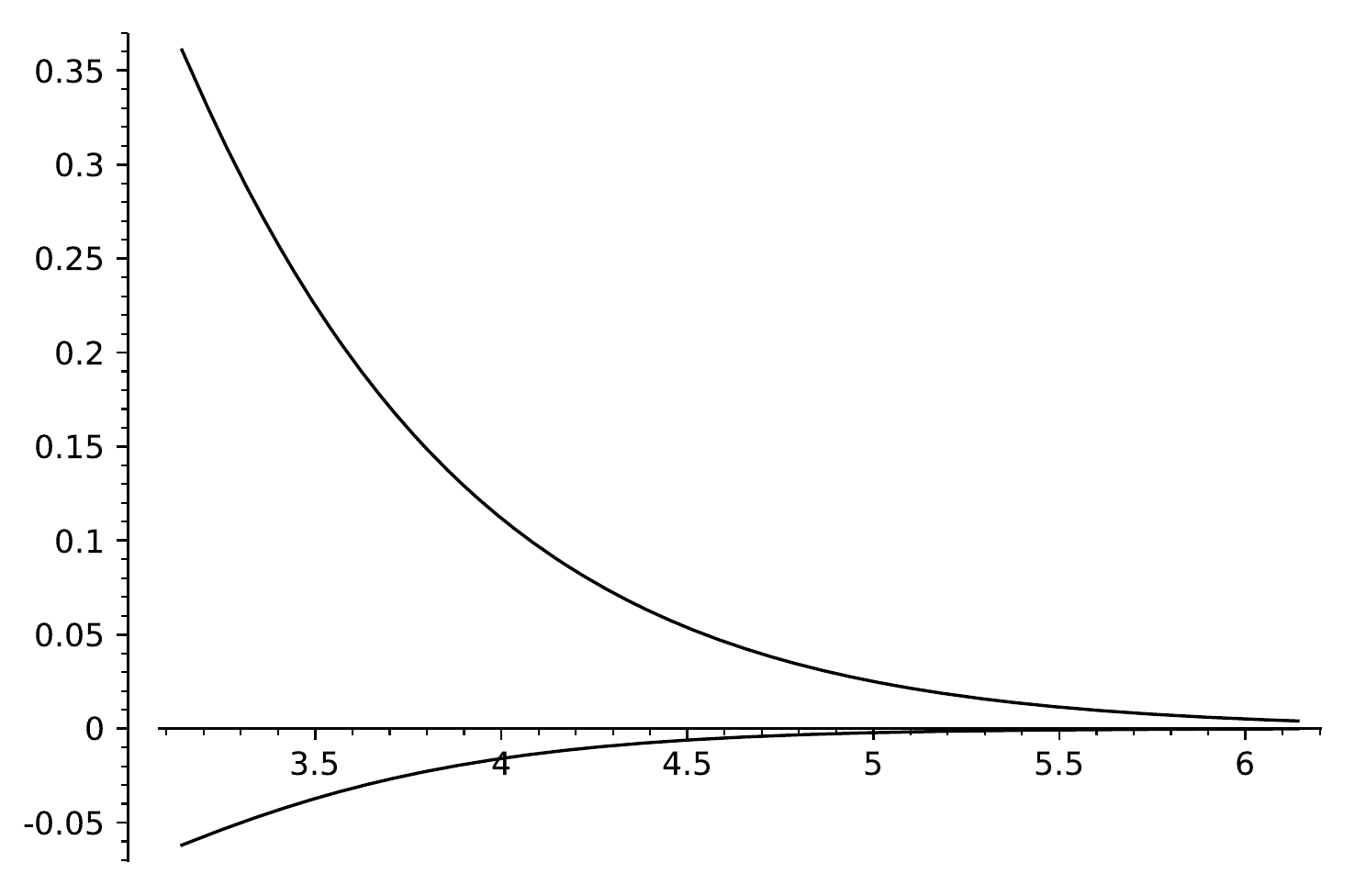}
\includegraphics[width=6.2cm]{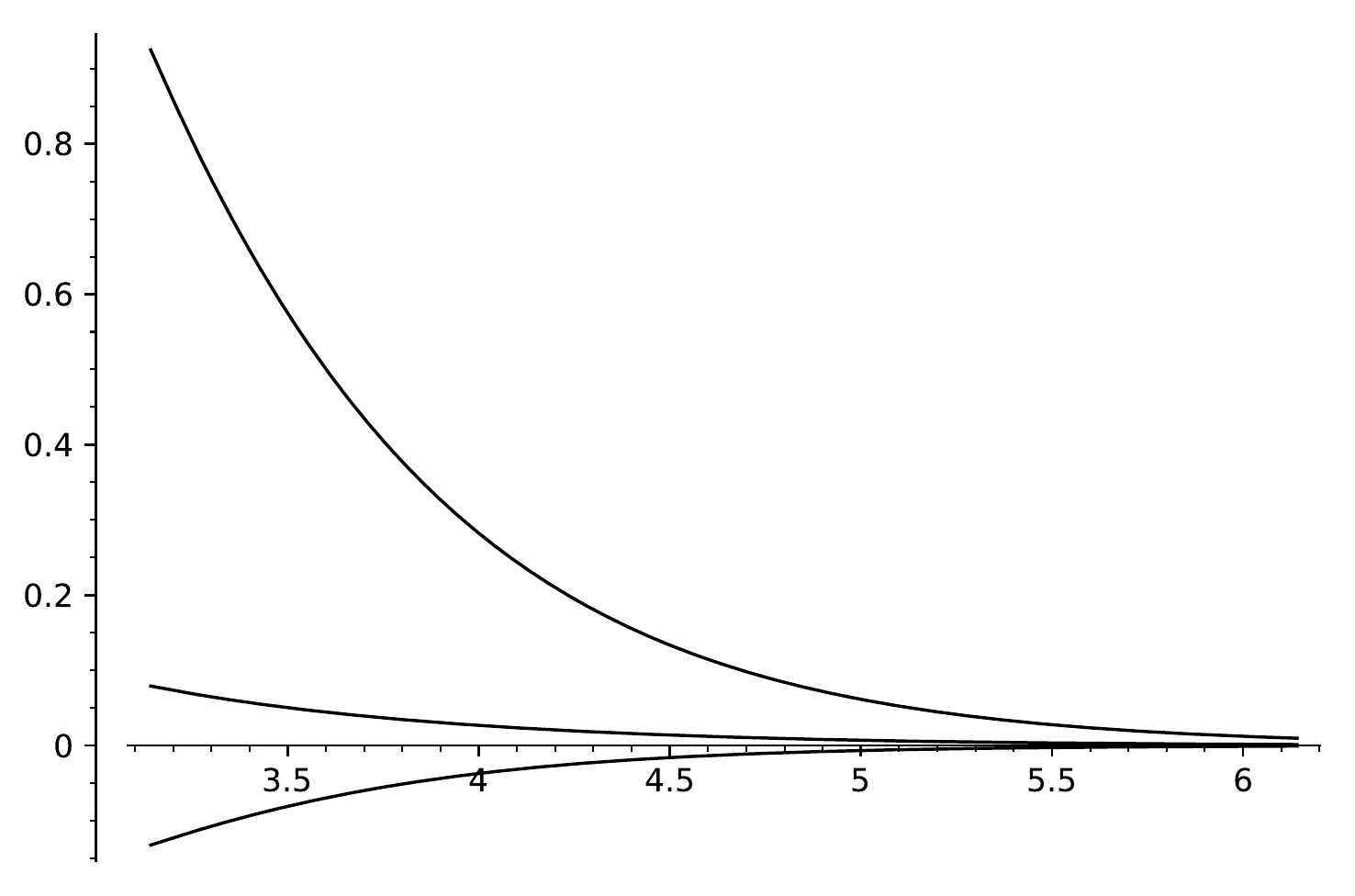}
\end{center}
\caption{The eigenvalues of the Hessian for $D_{16}^+$ (two different
  eigenvalues, left) and $E_8 \perp E_8$ (three different eigenvalues,
  right) depending on the parameter $\alpha$.}
\label{figure:Dimension-16}
\end{figure}

We introduce the following notation: The value in~\eqref{eq:main} we
denote by $\mu(L,\lambda,\alpha)$. We consider $\alpha = \pi$, then
\[
  \begin{array}{lcrlcr}
    \mu(D_{16}^+,8,\pi) & = & -0.06196\ldots
    & \qquad \mu(E_8 \perp E_8, 0,\pi) & = & -0.13245\ldots \\
    \mu(D_{16}^+,56,\pi) & = & 0.36093\ldots
    & \qquad \mu(E_8 \perp E_8, 24,\pi) & = & 0.07899\ldots\\
& &  & \qquad \mu(E_8 \perp E_8, 120,\pi) & = & 0.92480\ldots
   \end{array}
\]
We show in Section~\ref{ssec:Dimension-32} how to translate numerical
computations into rigorous bounds.

\subsection{Dimension 24}
\label{ssec:Dimension-24}

The Niemeier lattices are the even unimodular lattices in dimension
$24$ which have vectors of squared norm $2$. A classification of
Niemeier gave that there are $23$ Niemeier lattices and Venkov
realized that they can be characterized by their root system. The
theta series of a Niemeier lattice $L$ with root system $L(2)$ is the
modular form of weight $12$
\[
\Theta_{L}(\tau) = E_4^3(\tau) + (|L(2)| - 720) \Delta(\tau) = 1 + |L(2)| q + \cdots.
\]
The cusp form of weight $16$ is $E_4 \Delta$. We apply
Theorem~\ref{thm:main} together with Theorem~\ref{Theorem:Eigenvalues}
and Theorem~\ref{theorem:orthogonal sum} to determine the signature of
the Hessian at $\alpha = \pi$. We collect our results in
Table~\ref{tab:Dimension-24}. For large values of $\alpha$
Corollary~\ref{cor:large-alpha} shows that only the Niemeier lattices
with irreducible root systems, namely $A_{24}$ and $D_{24}$, are local
minima for $f_\alpha$-potential energy. All other Niemeier lattices
are saddle points for $f_\alpha$-potential energy for $\alpha$ large enough.

\newpage

\begin{small}
\begin{table}[H]
\begin{tabular}{@{}cccccr@{}}
\toprule
$\bm{L(2)}$ & $\bm{|L(2)|}$ & $\bm{h}$ & $\bm{\lambda}$ & \textbf{multiplicity} &
                                                   $\bm{\mu(L,\lambda,\pi)}$
  \\
  \midrule

    \multirow{2}{*}{$A_1^{24}$} &
   \multirow{2}{*}{$48$} &
   \multirow{2}{*}{$2$} & $0$ & $276$ & $0.0018\ldots$\\
    & & & $8$ & $23$ & $0.1044\ldots$\\
  \midrule

    \multirow{3}{*}{$A_2^{12}$} &
    \multirow{3}{*}{$72$} &
    \multirow{3}{*}{$3$} &
                                       $0$ & $264$ &
                                                     $-0.0050\ldots$\\
     & & & $6$ & $24$ & $0.0718\ldots$\\
            & & & $12$ & $11$ & $0.1488\ldots$\\
  \midrule

\multirow{3}{*}{$A_3^8$} &
\multirow{3}{*}{$96$} &
\multirow{3}{*}{$4$} &
$0$ &
$252$ &
$-0.0120\ldots$\\
& & &
$4$ &
$16$ &
       $0.0392\ldots$\\
& & &
  $8$ &
$24$ &
$0.0905\ldots$\\
& & &
$16$ &
$7$ &
$0.1931\ldots$\\
\midrule

\multirow{4}{*}{$A_4^6$} &
\multirow{4}{*}{$120$} &
\multirow{4}{*}{$5$} &
$0$ &
$240$ &
$-0.0189\ldots$\\
& & &
$4$ &
$30$ &
$0.0323\ldots$\\
& & &
$10$ &
$24$ &
$0.1092\ldots$\\
& & &
$20$ &
$5$ &
$0.2375\ldots$\\
\midrule

\multirow{5}{*}{$A_5^4 D_4$} &
\multirow{5}{*}{$144$} &
\multirow{5}{*}{$6$} &
$0$ &
$230$ &
$-0.0259\ldots$\\
& & &
$4$ &
$36$ &
$0.0253\ldots$\\
& & &
$8$ &
$9$ &
$0.0766\ldots$\\
& & &
$12$ &
$20$ &
$0.1279\ldots$\\
& & &
$24$ &
$4$ &
$0.2818\ldots$\\
  \midrule

\multirow{3}{*}{$D_4^6$} &
\multirow{3}{*}{$144$} &
\multirow{3}{*}{$6$} &
$0$ &
$240$ &
$-0.0259\ldots$\\
& & &
$8$ &
$54$ &
$0.0766\ldots$\\
& & &
$24$ &
$5$ &
$0.2818\ldots$\\
  \midrule

\multirow{4}{*}{$A_6^4$} &
\multirow{4}{*}{$168$} &
\multirow{4}{*}{$7$} &
$0$ &
$216$ &
$-0.0328\ldots$\\
& & &
$4$ &
$56$ &
$0.0184\ldots$\\
& & &
$14$ &
$24$ &
$0.1466\ldots$\\
& & &
$28$ &
$3$ &
$0.3262\ldots$\\
  \midrule

\multirow{5}{*}{$A_7^2 D_5^2$} &
\multirow{5}{*}{$192$} &
\multirow{5}{*}{$8$} &
$0$ &
$214$ &
$-0.0398\ldots$\\
& & &
$4$ &
$40$ &
$0.0114\ldots$\\
& & &
$8$ &
$20$ &
$0.0627\ldots$\\
& & &
$12$ &
$8$ &
      $0.1140\ldots$\\
  & & &
$16$ &
$14$ &
$0.1653\ldots$\\
& & &
$32$ &
$3$ &
$0.3705\ldots$\\
  \midrule

\multirow{4}{*}{$A_8^3$} &
\multirow{4}{*}{$216$} &
\multirow{4}{*}{$9$} &
$0$ &
$192$ &
$-0.0467\ldots$\\
& & &
$4$ &
$81$ &
$0.0045\ldots$\\
& & &
$18$ &
$24$ &
$0.1840\ldots$\\
& & &
$36$ &
$2$ &
$0.4149\ldots$\\
  \midrule

\multirow{6}{*}{$A_9^2D_6$} &
\multirow{6}{*}{$240$} &
\multirow{6}{*}{$10$} &
$0$ &
$189$ &
$-0.0537\ldots$\\
& & &
$4$ &
$70$ &
$-0.0024\ldots$\\
& & &
$8$ &
$15$ &
$0.0488\ldots$\\
& & &
$16$ &
$5$ &
      $0.1514\ldots$\\
  & & &
$20$ &
$18$ &
$0.2027\ldots$\\
& & &
$40$ &
$2$ &
$0.4592\ldots$\\
  \midrule

\multirow{4}{*}{$D_6^4$} &
\multirow{4}{*}{$240$} &
\multirow{4}{*}{$10$} &
$0$ &
$216$ &
$-0.0537\ldots$\\
& & &
$8$ &
$60$ &
$0.0488\ldots$\\
& & &
$16$ &
$20$ &
$0.1514\ldots$\\
& & &
$40$ &
$3$ &
$0.4592\ldots$\\
  \midrule

\multirow{3}{*}{$E_6^4$} &
\multirow{3}{*}{$288$} &
\multirow{3}{*}{$12$} &
$0$ &
$216$ &
$-0.0676\ldots$\\
& & &
$12$ &
$80$ &
$0.0862\ldots$\\
& & &
$48$ &
$3$ &
$0.5479\ldots$\\
\bottomrule
\end{tabular}
\bigskip
\caption{The eigenvalues of the Hessian of the Niemeier lattices for
  $\alpha = \pi$.}
\label{tab:Dimension-24}
\end{table}
\end{small}

\begin{small}
\begin{table}[H]
\begin{tabular}{@{}cccccr@{}}
\toprule
$\bm{L(2)}$ & $\bm{|L(2)|}$ & $\bm{h}$ & $\bm{\lambda}$ & \textbf{multiplicity} &
                                                   $\bm{\mu(L,\lambda,\pi)}$ \\    \midrule
\multirow{6}{*}{$A_{11} D_7 E_6$} &
\multirow{6}{*}{$288$} &
\multirow{6}{*}{$12$} &
$0$ &
$185$ &
$-0.0676\ldots$\\
& & &
$4$ &
$54$ &
$-0.0163\ldots$\\
& & &
$8$ &
$21$ &
$0.0349\ldots$\\
& & &
$12$ &
$20$ &
$0.0862\ldots$\\
& & &
$20$ &
$6$ &
      $0.1888\ldots$\\
& & &
$24$ &
$11$ &
$0.2401\ldots$\\
& & &
$48$ &
$2$ &
$0.5479\ldots$\\
  \midrule

\multirow{4}{*}{$A_{12}^2$} &
\multirow{4}{*}{$312$} &
\multirow{4}{*}{$13$} &
$0$ &
$144$ &
$-0.0746\ldots$\\
& & &
$4$ &
$130$ &
$-0.0233\ldots$\\
& & &
$26$ &
$24$ &
$0.2588\ldots$\\
& & &
$52$ &
$1$ &
$0.5923\ldots$\\
  \midrule

\multirow{4}{*}{$D_8^3$} &
\multirow{4}{*}{$336$} &
\multirow{4}{*}{$14$} &
$0$ &
$192$ &
$-0.0815\ldots$\\
& & &
$8$ &
$84$ &
$0.0210\ldots$\\
& & &
$24$ &
$21$ &
$0.2262\ldots$\\
& & &
$56$ &
$2$ &
$0.6366\ldots$\\
  \midrule

\multirow{6}{*}{$A_{15}D_9$} &
\multirow{6}{*}{$384$} &
\multirow{6}{*}{$16$} &
$0$ &
$135$ &
$-0.0954\ldots$\\
& & &
$4$ &
$104$ &
$-0.0441\ldots$\\
& & &
$8$ &
$36$ &
$0.0071\ldots$\\
& & &
$28$ &
$8$ &
      $0.2636\ldots$\\
& & &
$32$ &
$15$ &
$0.3149\ldots$\\
& & &
$64$ &
$1$ &
$0.7253\ldots$\\
  \midrule

\multirow{5}{*}{$A_{17} E_7$} &
\multirow{5}{*}{$432$} &
\multirow{5}{*}{$18$} &
$0$ &
$119$ &
$-0.1093\ldots$\\
& & &
$4$ &
$135$ &
$-0.0580\ldots$\\
& & &
$16$ &
$27$ &
$0.0958\ldots$\\
& & &
$36$ &
$17$ &
$0.3523\ldots$\\
& & &
$72$ &
$1$ &
$0.8140\ldots$\\
  \midrule

\multirow{5}{*}{$D_{10} E_7^2$} &
\multirow{5}{*}{$432$} &
\multirow{5}{*}{$18$} &
$0$ &
$189$ &
$-0.1093\ldots$\\
& & &
$8$ &
$45$ &
$-0.0067\ldots$\\
& & &
$16$ &
$54$ &
$0.0958\ldots$\\
& & &
$32$ &
$9$ &
$0.3010\ldots$\\
& & &
$72$ &
$2$ &
$0.8140\ldots$\\
  \midrule

\multirow{4}{*}{$D_{12}^2$} &
\multirow{4}{*}{$528$} &
\multirow{4}{*}{$22$} &
$0$ &
$144$ &
$-0.1371\ldots$\\
& & &
$8$ &
$132$ &
$-0.0345\ldots$\\
& & &
$40$ &
$22$ &
$0.3758\ldots$\\
& & &
$88$ &
$1$ &
$0.9914\ldots$\\
  \midrule

\multirow{2}{*}{$A_{24}$} &
\multirow{2}{*}{$600$} &
\multirow{2}{*}{$25$} &
$4$ &
$275$ &
$-0.1067\ldots$\\
& & &
$50$ &
$24$ &
$0.4832\ldots$\\
  \midrule

\multirow{5}{*}{$D_{16}E_8$} &
\multirow{5}{*}{$720$} &
\multirow{5}{*}{$30$} &
$0$ &
$128$ &
$-0.1928\ldots$\\
& & &
$8$ &
$120$ &
$-0.0902\ldots$\\
& & &
$24$ &
$35$ &
$0.1150\ldots$\\
& & &
$56$ &
$15$ &
$0.5254\ldots$\\
& & &
$120$ &
$1$ &
$1.3462\ldots$\\
  \midrule

\multirow{3}{*}{$E_8^3$} &
\multirow{3}{*}{$720$} &
\multirow{3}{*}{$30$} &
$0$ &
$192$ &
$-0.1928\ldots$\\
& & &
$24$ &
$105$ &
$0.1150\ldots$\\
& & &
$120$ &
$2$ &
$1.3462\ldots$\\
\midrule
\multirow{2}{*}{$D_{24}$} &
\multirow{2}{*}{$1104$} &
\multirow{2}{*}{$46$} &
$8$ &
$276$ &
$-0.2014\ldots$\\
& & &
$88$ &
$23$ &
$0.8246\ldots$\\
 \bottomrule
  \end{tabular}
\bigskip
\\
\textsc{Table~\ref{tab:Dimension-24}.} (continued).
\end{table}
\end{small}

\subsection{Dimension 32}
\label{ssec:Dimension-32}

In dimension 32 the even unimodular lattice have not been classified
yet. Some partial results are known: There are at least 80 million of
them, see Serre \cite{Serre1973a}. King \cite{King2002a} showed that
there are at least ten million even unimodular lattices without roots in
dimension 32.  Kervaire \cite{Kervaire1994a} classified all
indecomposable even unimodular lattices in dimension $32$ that possess
a full root system.

In general an even unimodular lattice need not even be a critical
point for the Gaussian potential function. The first such examples can
be found in dimension $32$, we briefly discuss one of them.

For example there exists a lattice $L \subseteq \R^{32}$ with complete
root system $A_1^8A_3^8$, see Kervaire~\cite{Kervaire1994a}. We split
the summation in the gradient into the contribution of the root system
and the contribution of all larger shells
\[
\begin{split}
 \langle \nabla  \mathcal{E}(f_\alpha,L),H\rangle &= -\alpha \sum_{x \in L \setminus \{0\}} H[x] e^{-\alpha \|x\|^2} \\
&= -\alpha e^{-2\alpha} \left(\sum_{x \in L(2)} H[x] \right)  - \alpha \left( \sum_{x \in L \setminus (\{0\} \cup L(2))} H[x] e^{-\alpha \|x\|^2}\right).
\end{split}
\]
We firstly evaluate
\[
\sum_{x \in L(2)} H[x] = \langle H, \sum_{x \in L(2)} x x^{\sf T} \rangle
\]
and use the fact that $A_1$ and $A_3$ form spherical $2$-designs and so
\[
\sum_{x \in L(2)} x x^{\sf T} = 2 h(A_1)  I_8 \oplus 2 h(A_3)  I_{24} = 4 I_8 \oplus 8 I_{24}.
\]
The matrix $H = 24 I_8 \oplus (-8)  I_{24}$ has trace zero and gives
\[
\sum_{x \in L(2)} H[x] = 24 \cdot 4 \cdot 8 - 8 \cdot 8 \cdot 24 = -4 \cdot 8 \cdot 24 = -768 \neq 0.
\]
Now, by the eigenvalue bounds for $H[x]$ coming from the Rayleigh-Ritz
principle, we find that
\[
  -8 = \lambda_{\min}(H) \leq \frac{H[x]}{\|x\|^2} \leq
  \lambda_{\max}(H) = 24.
\]

This allows to organize summation over all lattice vectors of squared length at least $4$ by shells
\[ -8 \sum_{m \geq 2} a_m \cdot 2m \cdot e^{-\alpha (2m)} \leq \sum_{x \in L \setminus (\{0\} \cup L(2))} H[x] e^{-\alpha \|x\|^2} \leq 24 \sum_{m \geq 2} a_m \cdot 2m \cdot   e^{-\alpha (2m)},
\]
where $a_m = |L(2m)|$ ist the $m$-th coefficient of the theta series $\Theta_L$ of $L$.

Combining the above, we see that it suffices to show
\begin{equation} \label{eq:noncritical_final_ineq}
  24 \sum_{m \geq 2} a_m \cdot 2m \cdot e^{-\alpha (2m)} \leq 768 \cdot e^{-2\alpha}.
\end{equation}
For this we write $\Theta_L$ in the form
$\Theta_L = E_{16} + f$, where $f$ is a cusp form of weight $16$. Let
 \[
   E_{16}(\tau) = \sum_{m=0}^\infty b_m q^m \quad \text{and} \quad
   f(\tau) = \sum_{m=1}^\infty c_m q^m,
\]
in particular
$b_m = -\frac{32}{B_{16}} \sigma_{15}(m) = 16320/3617 \sigma_{15}(m)$
and so $c_1 = -16320/3617$. We use the estimate
$\sigma_{k-1}(m) \leq \zeta(k-1) m^{k-1}$, where $\zeta$ is the
Riemann zeta function, and get $b_m \leq 4.6 m^{15}$. To bound $c_m$
we use \eqref{eq:explicit-bound}, the facts $\ell = 1$, $d(m) \leq
2 \sqrt{m}$, and get
$|c_m| \leq 1.2 \cdot 10^{10} m^8$. Together,
\begin{equation} \label{eq:theta_coefficient_bound}
  |a_m| \leq 4.6 m^{15} + 1.2 \cdot 10^{10} m^8.
\end{equation}
We evaluate for $\alpha = 14$, this gives
\[
  \sum_{m \geq 2} a_m \cdot 2m \cdot e^{-28m} \leq 9.2 \sum_{m = 2}^\infty m^{16} \cdot e^{-28m} + 2.4 \cdot 10^{10} \sum_{m = 2}^\infty m^{9} \cdot e^{-28m}.
\]
By Lemma~\ref{lem:integral-test-estimate} we have
\[
  \sum_{m=2}^\infty m^{16}
  e^{-28 m} \leq 3.2 \cdot 10^{-20} + (28)^{-17}
  \Gamma(17, 56) \leq 3.3 \cdot 10^{-20},
\]
and
\[
  \sum_{m=2}^\infty m^{9}
  e^{-28 m} \leq 2.5 \cdot 10^{-22} + (2\alpha)^{-10}
  \Gamma(10, 56)   \leq 2.6 \cdot 10^{-22}.
\]
Putting everything together for $\alpha = 14$ in
\eqref{eq:noncritical_final_ineq} we find
\[
  \begin{split}
  24 \sum_{m \geq 2} a_m \cdot 2m \cdot e^{-\alpha (2m)} \; \leq \; &  24 \left( 9.2 \cdot 3.3 \cdot 10^{-20} + 2.4 \cdot 10^{10} \cdot 2.6 \cdot 10^{-22} \right) \\
  \leq \; & 24 \left( 3.1 \cdot 10^{-19} + 6.3 \cdot 10^{-12} \right) \\
  \leq \; & 1.6 \cdot 10^{-10} \\
  \leq \; &  768 \cdot e^{- 28}.
  \end{split}
\]
This shows that this lattice is not a critical point for the Gaussian
potential function $e^{-14r}$.

\bigskip

Last, but not least, we show that all even unimodular
lattices without roots in dimension $32$ are local maxima for the Gaussian potential
function $e^{-\pi r}$. All the even unimodular lattices in
dimension 32 without roots have the same theta series, for such a lattice
$L \subseteq \R^{32}$ we have
\[
  \begin{split}
    \Theta_L(\tau) \; = \; & E_4^4(\tau) - 960 E_4(\tau) \Delta(\tau)\\
    \; = \; &
    1 + 146880 q^2 + 64757760 q^3 + 4844836800 q^4 + 137695887360 q^5\\
    & \quad + 2121555283200 q^6 + 21421110804480 q^7\\
    & \quad + 158757684004800 q^8 + \cdots
   \end{split}
 \]
 All shells of $L$ form spherical 4-designs, so $L$ is critical for
 all Gaussian potential functions and we can compute the eigenvalue of
 the Hessian \eqref{eq:Hessian} using
 \eqref{eq:Coulangeon-Hessian}. For $\alpha = \pi$ we compute the
 first summands of the series and get
 \[
\frac{1}{n(n+2)} \sum_{m=0}^8  a_m \pi (2m) (\pi (2m) - (n/2+1))
  e^{-\pi(2m)} < -0.00027.
 \]

\begin{figure}[htb]
\begin{center}
\includegraphics[width=8cm]{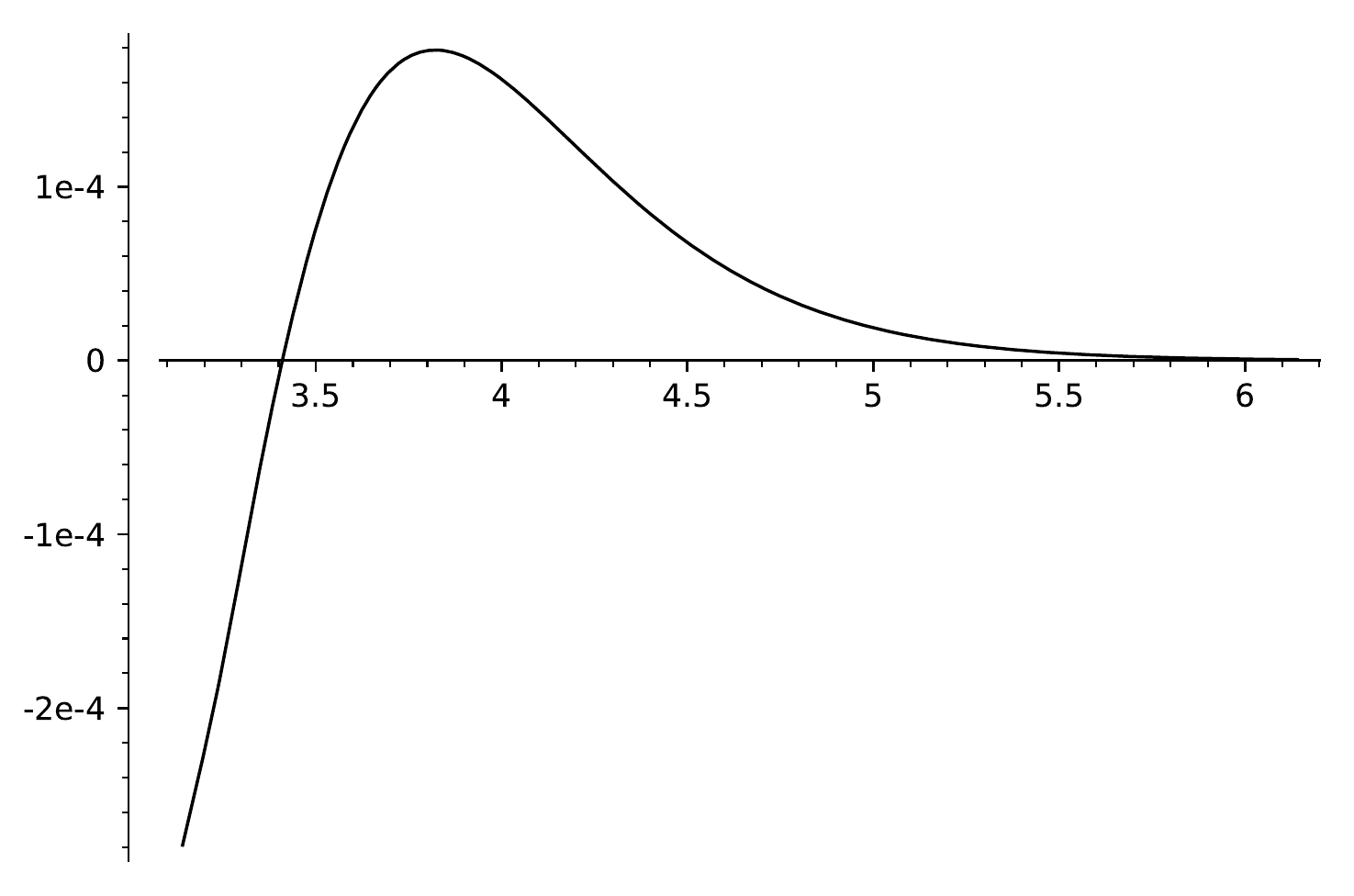}
\end{center}
\caption{The eigenvalue of the Hessian for even unimodular
  lattices in dimension $32$ without roots depending on the parameter $\alpha$.}
\label{figure:Dimension-32}
\end{figure}

Now we argue that the tail of the series is so small that the entire
series is strictly negative.


For this, again, we use the bound \eqref{eq:theta_coefficient_bound}
for the coefficients $a_m$ of $\Theta_L$, and we estimate
\[
\left| \sum_{m=9}^\infty a_m \pi(2m)(\pi(2m) - (n/2+1))
  e^{-\pi(2m)} \right| \leq \sum_{m=9}^\infty |a_m| (2\pi m)^2
  e^{-2\pi m},
\]
and
\[
  \sum_{m=9}^\infty |a_m| (2\pi m)^2
  e^{-2\pi m} \leq   181.7 \sum_{m=9}^\infty m^{17}
  e^{-2\pi m} + 4.8 \cdot 10^{11}  \sum_{m=9}^\infty m^{10}
  e^{-2\pi m}.
\]
Again, by Lemma~\ref{lem:integral-test-estimate}
\[
  \sum_{m=9}^\infty m^{17}
  e^{-2\pi m} \leq 4.7 \cdot 10^{-9} + (2\pi)^{-18}
  \Gamma(18, 18\pi)
  \leq 5.8 \cdot 10^{-9}.
\]
Similarly,
\[
  \sum_{m=9}^\infty m^{10}
  e^{-2\pi m} \leq 9.7 \cdot 10^{-16} + (2\pi)^{-11}
  \Gamma(11, 18\pi) \leq
1.2 \cdot 10^{-15}.
\]
Altogether:
\[
  \begin{split}
& \left|  \frac{1}{n(n+2)} \sum_{m=9}^\infty  a_m \pi (2m) (\pi (2m) - (n/2+1))
  e^{-\pi(2m)} \right|\\
& \leq \;  1088^{-1} \left( 181.7 \cdot 5.8 \cdot
  10^{-9} + 4.8 \cdot 10^{11} \cdot 1.2 \cdot 10^{-15} \right) \\
& \leq \; 5.4 \cdot 10^{-7}.
\end{split}
\]

Hence, we showed that for $\alpha = \pi$ the even unimodular lattices
in dimension $32$ without roots are local maxima for the Gaussian
potential function. This answers a question of Regev and
Stephens-Davidowitz \cite{Regev2016a}.

\section*{Acknowledgements}

F.V. and M.C.Z. thank Noah Stephens-Davidowitz for a discussion at the
Simons Institute for the Theory of Computing during the workshop
``Lattices: Geometry, Algorithms and Hardness workshop'' (February
18--21, 2020, organized by Daniele Micciancio, Daniel Dadush, Chris
Peikert) which lead to the results of this paper.

\smallskip

We thank the anonymous referees for their helpful comments,
suggestions, and corrections on the manuscript.

\smallskip

This project has received funding from the European Union's Horizon
2020 research and innovation programme under the Marie
Sk\l{}odowska-Curie agreement No 764759. F.V. is partially supported
by the SFB/TRR 191 ``Symplectic Structures in Geometry, Algebra and
Dynamics'', F.V. and M.C.Z. are partially supported ``Spectral bounds
in extremal discrete geometry'' (project number 414898050), both
funded by the DFG. A.H. is partially supported by the DFG under the
Priority Program CoSIP (project number SPP1798).  A.T. is partially
funded through HYPATIA.SCIENCE by the financial fund for the
implementation of the statutory equality mandate and the Department of
Mathematics and Computer Science, University of Cologne.

\end{document}